\documentclass{article}

\usepackage{graphicx}

\usepackage{authblk}

\usepackage{tikz}
\usetikzlibrary{arrows,calc}

\tikzstyle{arc}=[->,shorten <=3pt, shorten >=3pt,
                 >=stealth, line width=1.1pt]
\tikzstyle{edge}=[shorten <=2pt, shorten >=2pt,
                  >=stealth, line width=1.1pt]
\tikzstyle{vertex}=[circle, fill=white, draw,
                    minimum size=5pt,
                    inner sep=0pt, outer sep=0pt]

\usepackage{float}

\usepackage{xcolor}

\usepackage{amsmath}
\usepackage{amssymb}

\usepackage{amsthm}

\usepackage[capitalize]{cleveref}

\newtheorem{theorem}{Theorem}
\newtheorem{lemma}[theorem]{Lemma}
\newtheorem{corollary}[theorem]{Corollary}
\newtheorem{proposition}[theorem]{Proposition}

\theoremstyle{definition}
\newtheorem{problem}[theorem]{Problem}
\newtheorem{question}[theorem]{Question}

\title{Critical Kernel Imperfectness in $4$-quasi-transitive and
$4$-anti-transitive digraphs of small diameter%
\thanks{The authors gratefully acknowledge support from grants SEP-CONACYT
A1-S-8397, DGAPA-PAPIIT IA101423, and CONACYT FORDECYT-PRONACES/39570/2020.
The first author is supported by postdoctoral grant “Estancias Posdoctorales por
México” by CONACYT (CVU: 622815)}}

\author[1]{Germ\'an~Ben\'itez-Bobadilla\thanks{german@ciencias.unam.mx}}
\author[2]{Hortensia~Galeana-S\'anchez\thanks{hgaleana@matem.unam.mx}}
\author[1]{C\'esar~Hern\'andez-Cruz\thanks{chc@ciencias.unam.mx}}

\affil[1]{Facultad de Ciencias, Universidad Nacional Aut\'onoma de M\'exico}
\affil[2]{Instituto de Matem\'aticas, Universidad Nacional Aut\'onoma de
          M\'exico}

\begin{document}
\date{}

\maketitle
\begin{abstract}
    A kernel in a digraph is an independent and absorbent subset of its vertex
    set.   A digraph is critical kernel imperfect if it does not have a kernel,
    but every proper induced subdigraph does.   In this article, we characterize
    asymmetrical $4$-quasi-transitive and $4$-transitive digraphs, as well as
    $2$-anti-transitive, and asymmetrical $4$-anti-transitive digraphs with
    bounded diameter, which are critical kernel imperfect.
\end{abstract}

\section{Introduction}

All digraphs considered are finite, with neither multiple arcs nor loops. For
general concepts we refer the reader to \cite{bang2018,bang2009}. 

Let $D$ be a digraph.  A subset $S$ of $V(D)$ is \textit{absorbent} if for every
vertex $u$ not in $S$ there is a vertex $v \in S$ such that $(u,v) \in A(D)$. A
\textit{kernel} is a subset $K$ of $V(D)$ which is independent and absorbent.
Kernels in digraphs were introduced by von Neumann and Morgenstern in the
context of game theory \cite{vonNeumann1944}, and received a lot of attention at
the end of the twentieth century for their relation to the Strong Perfect Graph
Conjecture (see e.g. \cite{borosDAM306}), now the Strong Perfect Graph Theorem
\cite{chudnovskyAM164}.   Although kernels are useful to model a wide variety of
problems, it is computationally hard to determine whether a digraph has a
kernel. Chv\'atal proved that the recognition of digraphs that have a kernel is
an NP-complete problem \cite{chvatal1973}. Furthermore, Hell and Hernández-Cruz
proved that this problem remains NP-complete even when the underlying graph is
3-colourable \cite{hellDMGT34} (this in contrast to the fact that every
bipartite digraph has a kernel).  For this reason, it is interesting to find
sufficient conditions, which can be efficiently verified, implying the existence
of a kernel in a digraph.

A digraph such that every proper induced subdigraph has a kernel is
\textit{kernel-perfect} if it has a kernel, and \textit{critical kernel
imperfect} (\textit{CKI} for short) if not.   As classical examples, we have
that directed odd cycles are critical kernel imperfect digraphs, and directed
even cycles are kernel-perfect digraphs.  It is clear from these definitions
that a digraph is kernel-perfect if and only if it does not contain a critical
kernel imperfect digraph as an induced subdigraph.  For this reason, for a
family of digraphs $\mathcal{D}$, the knowledge of the exact digraphs in
$\mathcal{D}$ which are critical kernel imperfect gives us a sufficient
condition for a graph in $\mathcal{D}$ to have a kernel.   Moreover, if the
number of critical kernel imperfect digraphs in $\mathcal{D}$ is finite, a
polynomial time algorithm for recognizing kernel perfect members of
$\mathcal{D}$ trivially exists. Therefore, it is useful to exhibit the exact set
of critical kernel imperfect digraphs in a family of digraphs.  This has been
done for some of the best known families of digraphs, which we now quickly
survey.

For an integer $m$, with $m \ge 2$, and a nonempty subset $J$ of $\mathbb{Z}_m -
\{ 0 \}$, the \textit{circulant digraph} $\overrightarrow{C}_m(J)$ is defined to
have vertex set $V(\overrightarrow{C}_m(J))=\mathbb{Z}_m$ and arc set
$A(\overrightarrow{C}_m(J)) = \{ (i,j) \colon\ i, j \in \mathbb{Z}_m, j-i \in J
\}$.  There are two cases of circulant digraphs of special interest to us.   Let
$n$ be an integer, with $n \ge 3$. The directed cycle $\overrightarrow{C}_n$ is
the circulant digraph $\overrightarrow{C}_n(\{ 1 \})$. Clearly, the complement
of a circulant digraph is also a circulant digraph; complements of directed
cycles will be relevant for this subject.  The \textit{directed $n$-antihole}
$\overrightarrow{A}_n$ is the circulant digraph $\overrightarrow{C}_n(J)$, where
$J = \{ 1, \dots, n-2 \}$; note that $\overrightarrow{A}_3 =
\overrightarrow{C}_3$. A \textit{directed antihole} is an $n$-directed antihole
for some integer $n$. The family of directed antiholes is denoted by
$\mathcal{A}$.

Regarding small digraphs, in \cite{balbuenaAKCE11}, Balbuena, Guevara, and Olsen
characterized the asymmetrical critical kernel imperfect digraphs with at most 7
vertices as follows.

\begin{theorem}\cite{balbuenaAKCE11}
\label{thm:ckiless7}
  There are exactly four asymmetrical critical kernel imperfect digraphs of
  order less than $8$, namely, $\overrightarrow{C_3}$, $\overrightarrow{C_5}$,
  $\overrightarrow{C_7}$, and $\overrightarrow{C_7}(1,2)$.
\end{theorem}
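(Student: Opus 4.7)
The plan is to proceed in two parts: a forward direction establishing that the four listed digraphs are CKI, and a converse showing no other asymmetrical digraph of order at most $7$ qualifies.

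For the forward direction, I would verify each digraph individually. For the odd directed cycle $\overrightarrow{C_{2k+1}}$, the maximum independent sets have size $k$ and consist of pairwise non-consecutive vertices; a parity argument shows that no such set can absorb all the remaining $k+1$ vertices, so $\overrightarrow{C_{2k+1}}$ is kernel-less. Removing any vertex leaves a directed path, and directed paths clearly admit a kernel (take every second vertex starting from the unique sink), so $\overrightarrow{C}_3$, $\overrightarrow{C}_5$, and $\overrightarrow{C}_7$ are CKI. For $\overrightarrow{C_7}(1,2)$, a direct calculation using the vertex-transitivity of the circulant digraph shows that no singleton or $2$-element independent set is absorbent, and then checks the finitely many larger independent sets; symmetry and the removal of a single vertex reduce the subdigraph analysis to a small, tractable case.

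For the converse, I would begin by collecting structural restrictions that any asymmetrical CKI digraph $D$ must satisfy. No vertex $v$ of $D$ can be a sink, since otherwise $\{v\}$ together with a kernel of $D - N^-[v] - v$ would be a kernel of $D$; no vertex can dominate $V(D) \setminus \{v\}$, since then $\{v\}$ would be a kernel; moreover, by asymmetry, $D$ admits no digons. These constraints, together with lower bounds on in- and out-degrees forced by the absence of sources and sinks, severely limit the admissible degree sequences on up to $7$ vertices.

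The core of the proof is then an exhaustive analysis organized by the order $n \in \{3,4,5,6,7\}$. For $n = 3$, the only asymmetrical digraph meeting the structural restrictions is $\overrightarrow{C}_3$. For $n \in \{4,5,6\}$, I would case-split on the maximum out-degree $\Delta^+(D)$ and on the induced subdigraph on $N^+(v)$ for a vertex $v$ of maximum out-degree: either $N^+(v)$ contains a kernel $K$ of $D[N^+(v)]$ that is absorbent in $D$, in which case $D$ has a kernel, or the failure of absorption localizes the structure strongly enough to either force a sink/source, produce $\overrightarrow{C}_5$ as $D$ itself (for $n = 5$), or contradict asymmetry. For $n = 7$, the argument is more delicate and must additionally accommodate $\overrightarrow{C_7}(1,2)$; I would pick a vertex $v$ satisfying an extremal property such as having the maximum out-degree and study the bipartition of $V(D) \setminus \{v\}$ into $N^+(v), N^-(v)$, and the remaining vertices, using the CKI property on $D - v$ to transfer a kernel of $D - v$ into an object that either absorbs $v$ (giving a kernel of $D$, contradicting CKI-ness) or pins down the arc structure up to isomorphism.

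The main obstacle I expect is the combinatorial explosion at $n = 6$ and $n = 7$: many asymmetrical digraphs exist on these orders, and a naive case analysis is unwieldy. The technical heart of the argument will be a recursive lemma stating that if $D$ is asymmetrical, CKI, and not a directed odd cycle, then for every vertex $v$ the kernel of $D - v$ has very constrained interaction with $v$; iterating this constraint over all vertices on $n \le 7$ vertices should leave only $\overrightarrow{C_7}(1,2)$ as the non-cycle CKI candidate.
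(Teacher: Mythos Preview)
The paper does not prove this theorem at all: it is quoted verbatim from Balbuena, Guevara, and Olsen \cite{balbuenaAKCE11} and used as a black box (specifically in the proof of Theorem~\ref{thm:4tCKI}). There is therefore no ``paper's own proof'' to compare your proposal against.

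On the proposal itself, two remarks. First, one of your structural restrictions is misstated: if a vertex $v$ dominates $V(D)\setminus\{v\}$, then $\{v\}$ is certainly independent, but it is \emph{not} absorbent (absorbency requires arcs \emph{into} $v$), so this does not yield a kernel; in an asymmetrical digraph such a $v$ would in fact be a source, not produce a contradiction of the kind you claim. Second, and more substantively, the converse direction you outline is a plan rather than a proof: the ``recursive lemma'' governing how kernels of $D-v$ interact with $v$ is never stated precisely, and the promised case analyses for $n\in\{6,7\}$ are not carried out. The original paper \cite{balbuenaAKCE11} handles this via a detailed structural study (and, for the small orders, essentially an exhaustive verification), which is exactly the combinatorial explosion you flag as the main obstacle; nothing in your sketch indicates how you would actually tame it.
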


An arc $(u,v)$ of a digraph $D$ is \textit{symmetrical} if $(v,u)$ is also an
arc of $D$, and \textit{asymmetrical} otherwise. If every arc of $D$ is
asymmetrical we will say that $D$ is an \textit{asymmetrical digraph}.
Analogously, if every arc of $D$ is symmetrical we will say that $D$ is an
\textit{symmetrical digraph}. A $k$-path ($k$-cycle) is a directed path (cycle)
of length $k$.

As usual, for a nonempty subset $S$ of $V(D)$, we denote by $D[S]$ the
subdigraph of $D$ induced by $S$. For digraphs $D$ and $H$, we say that $D$ is
\textit{$H$-free} if $D$ does not contain $H$ as an induced subdigraph.   If
$\mathcal{F}$ is a set of digraphs, we say that a digraph $D$ is
$\mathcal{F}$-free if it is $F$-free for every $F \in \mathcal{F}$.

In \cite{galeana86DM}, Galeana-Sánchez and Neumann-Lara characterized the
semicomplete critical kernel imperfect digraphs as follows.

\begin{theorem}\cite{galeana86DM}
\label{thm:semicompCKI}
  A semicomplete digraph is critical kernel imperfect if and only if it is a
  directed antihole.   Therefore, a semicomplete digraph is kernel-perfect if
  and only if it is $\overrightarrow{\mathcal{A}}$-free.
\end{theorem}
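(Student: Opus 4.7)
The plan is to prove the equivalence in both directions and then derive the kernel-perfect characterization as an easy corollary.

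For the \emph{if} direction, I would verify that every directed antihole $\overrightarrow{A}_n$ is critical kernel imperfect. Since $\overrightarrow{A}_n$ is semicomplete, any two distinct vertices are joined by at least one arc, so an independent set has at most one vertex; hence any kernel must be a single absorbent vertex. Inspecting the arc rule $j-i\in\{1,\dots,n-2\}$, vertex $i$ is missing precisely the in-arc from $i+1$, so no vertex is absorbent and $\overrightarrow{A}_n$ has no kernel. For a proper subset $S\subsetneq\mathbb{Z}_n$, one can always choose $i\in S$ with $i+1\notin S$ (otherwise $S$ would be closed under the successor and equal $\mathbb{Z}_n$), and then $\{i\}$ is a kernel of the induced subdigraph on $S$.

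For the \emph{only if} direction, let $D$ be a semicomplete critical kernel imperfect digraph on $n$ vertices. I would first show $D$ is strongly connected: otherwise its terminal strong component $C$ is a proper induced subdigraph whose kernel (supplied by CKI) extends to a kernel of $D$, since $C$ is dominated by every earlier strong component, contradicting the absence of a kernel in $D$. Because $D$ is semicomplete, the non-existence of a kernel is equivalent to every vertex $v$ having a nonempty set $\sigma(v)$ of asymmetric out-neighbors. Applying CKI to $D-w$ for each vertex $w$ yields a vertex $v$ whose missing in-neighbor in $D$ is exactly $w$, that is $\sigma(v)=\{w\}$; so every vertex of $D$ is the unique asymmetric out-neighbor of some vertex, and in particular has at least one asymmetric in-neighbor in $D$.

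The main obstacle is upgrading these local constraints to the global statement that the asymmetric subdigraph $D_a$ of $D$ is a single Hamiltonian directed cycle, which immediately identifies $D$ with $\overrightarrow{A}_n$. I would first prove $|\sigma(v)|=1$ for every $v$ by contradiction: assuming $\{w_1,w_2\}\subseteq\sigma(v)$, the kernels provided by CKI on carefully chosen proper induced subdigraphs (such as $D-v$, $D-w_1$, $D-w_2$, and $D-\{w_1,w_2\}$) impose incompatible structural constraints, eventually producing a vertex of $D$ with empty $\sigma$, contradicting the no-kernel condition. A double count of asymmetric arcs then forces every vertex also to have exactly one asymmetric in-neighbor, so $D_a$ is a vertex-disjoint union of directed cycles spanning $V(D)$. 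If $D_a$ had two components $V_1, V_2$, then $D[V_1]$ would be a proper induced subdigraph containing an absorbent vertex $v^*\in V_1$ by CKI, and because all arcs between $V_1$ and $V_2$ are symmetric, $v^*$ would be absorbent in $D$ itself, a contradiction. Hence $D_a$ is a single Hamiltonian directed cycle and $D=\overrightarrow{A}_n$.

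The ``therefore'' clause then follows immediately: an induced subdigraph of a semicomplete digraph is semicomplete, and if such a digraph lacks a kernel then a vertex-minimal no-kernel induced subdigraph is CKI, hence by the equivalence is a directed antihole. Thus a semicomplete digraph is kernel-perfect if and only if it is $\overrightarrow{\mathcal{A}}$-free.
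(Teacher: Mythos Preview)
The paper does not give its own proof of this theorem; it is quoted from \cite{galeana86DM} as a known result, so there is no in-paper argument to compare against.

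Your outline is essentially correct and would yield a valid proof, but one step is stated far more vaguely than necessary. After you observe that applying CKI to $D-w$ produces a vertex $v$ with $\sigma(v)=\{w\}$, you already have everything needed to conclude $|\sigma(v)|=1$ for \emph{every} $v$: the vertices $v_w$ so obtained (one for each $w\in V(D)$) are pairwise distinct, since $\sigma(v_w)=\{w\}$ determines $w$; there are $n=|V(D)|$ of them, hence they exhaust $V(D)$, and every vertex has $|\sigma|=1$. The contradiction scheme you sketch (``carefully chosen proper induced subdigraphs \dots\ impose incompatible structural constraints, eventually producing a vertex of $D$ with empty $\sigma$'') is neither needed nor clearly workable as written; replace it with this one-line pigeonhole count. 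The remainder of your argument---that $D_a$ is then $1$-regular, hence a disjoint union of directed cycles, and that if there were two components then a kernel vertex of one component (supplied by CKI) would be absorbent in all of $D$ because every cross-arc is symmetric---is correct. One small wording fix: when ruling out multiple components you should say ``all arcs between $V_1$ and $V(D)\setminus V_1$ are symmetric'', not just between $V_1$ and $V_2$.
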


A natural step after semicomplete digraphs, is to consider generalizations of
tournaments (or more precisely, generalizations of semicomplete digraphs). One
of the better known such generalizations, is the family of locally-semicomplete
digraphs.   A digraph is \textit{locally semicomplete} if the neighbourhood of
any vertex induces a semicomplete digraph.   Another two interesting families
arise when we only ask for either the in-neighbours or the out-neighbours of
each vertex to induce a semicomplete digraph.   A digraph $D$ is \textit{locally
(out-)in-semicomplete} if the (out-)in-neighbourhood of every vertex of $D$
induces a semicomplete digrasph. The critical kernel imperfect digraphs which
are locally in-semicomplete, locally out-semicomplete or locally semicomplete
digraphs, were characterized by Galeana-Sánchez and Olsen in
\cite{galeanaDMTCS18}.

\begin{theorem}\cite{galeanaDMTCS18}
  \label{thm:ckilocalsemi}
  Let $D$ be a digraph.   If $D$ is locally semicomplete, locally
  in-semicomplete, or locally out-semicomplete, then $D$ is critical kernel
  imperfect if and only if it is a directed odd cycle, a directed antihole, or
  $D \cong \overrightarrow{C}_7(1, 2)$.
\end{theorem}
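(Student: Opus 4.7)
The \emph{if} direction is a direct check. Directed odd cycles are CKI and trivially locally semicomplete, since every in- and out-neighbourhood contains a single vertex. Directed antiholes are semicomplete, hence locally semicomplete, and are CKI by \cref{thm:semicompCKI}. Finally, $\overrightarrow{C}_7(1,2)$ is CKI by \cref{thm:ckiless7}, and for each $v \in \mathbb{Z}_7$ both $N^+(v)=\{v+1,v+2\}$ and $N^-(v)=\{v-1,v-2\}$ induce a single arc, hence a semicomplete digraph; so this digraph indeed lies in all three local classes.

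For the \emph{only if} direction I would begin by recording the standard properties enjoyed by any CKI digraph $D$: it is strongly connected, has no sink, and for every $v \in V(D)$ the proper subdigraph $D - v$ admits a kernel $K_v$ which necessarily satisfies $N^+(v) \cap K_v = \emptyset$ (otherwise $K_v$ would already be a kernel of $D$) and $N^-(v) \cap K_v \neq \emptyset$ (otherwise $K_v \cup \{v\}$ would be an independent absorbent set in $D$). Together with the local semicompleteness hypothesis, these constraints will be the main leverage throughout the argument.

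Since the family of target digraphs is closed under arc reversal but the CKI property itself is not, I would analyse the locally in-semicomplete and locally out-semicomplete cases separately, noting that the locally semicomplete case is a common refinement of the two. In each, the subcase where $D$ is itself semicomplete is settled immediately by \cref{thm:semicompCKI}, producing exactly the antiholes. Otherwise, one may fix two non-adjacent vertices in $D$ and exploit the semicompleteness of out-neighbourhoods (resp.\ in-neighbourhoods) together with the kernel-extension obstructions above to show that $D$ must be \emph{round}, i.e.\ its vertices admit a cyclic ordering under which both $N^+(v)$ and $N^-(v)$ are consecutive intervals.

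The main obstacle lies in the final step: classifying the round digraphs in each of these local classes which are CKI. The existence of $\overrightarrow{C}_7(1,2)$ shows that this is a genuine finite-case analysis rather than an immediate contradiction, so brute arc-counting will not suffice. I expect to carry it out via careful bookkeeping of the forced arcs between the intervals of the round ordering, combined with the kernel-extension constraints, to conclude that beyond the directed odd cycles the only surviving possibility is $\overrightarrow{C}_7(1,2)$, yielding the announced list.
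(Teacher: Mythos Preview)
This theorem is not proved in the present paper: it is quoted verbatim from \cite{galeanaDMTCS18} (Galeana-S\'anchez and Olsen) and used only as a black box, specifically inside the proof of \Cref{lem:cki4qtdiam3}. Consequently there is no ``paper's own proof'' to compare your proposal against.

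As for the proposal itself, the \emph{if} direction is fine. For the \emph{only if} direction, your plan has a real gap at the step where you assert that a non-semicomplete CKI digraph in one of these local classes must be \emph{round}. Roundness is a strong global conclusion, and nothing in your sketch explains how the kernel-extension constraints on $K_v$ (which are purely local around a single vertex $v$) force a coherent cyclic ordering of \emph{all} vertices. In the actual reference \cite{galeanaDMTCS18} the argument goes through the known structure theory of locally (in-/out-)semicomplete digraphs---in particular the round-decomposable/non-round-decomposable dichotomy for the locally semicomplete case and the path-mergeability properties for the in/out cases---rather than trying to derive roundness directly from the CKI hypothesis. If you want a self-contained proof, you will need to invoke or reprove that structural machinery; the kernel constraints alone do not get you there.
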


Arc-locally semicomplete digraphs were introduced by Bang-Jensen in
\cite{bang1993}, as a common generalization of semicomplete and semicomplete
bipartite digraphs. Critical kernel imperfect digraphs which are arc-locally
semicomplete where characterized by Galeana-S\'anchez in \cite{galeanaDM306}.

\begin{theorem}\cite{galeanaDM306}
    Let $D$ be a digraph.   If $D$ is arc-locally semicomplete, then it is
    critical kernel imperfect if and only if it is a directed odd cycle or a
    directed antihole.
\end{theorem}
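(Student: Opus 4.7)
The forward direction is routine: directed antiholes are semicomplete, hence arc-locally semicomplete, and are CKI by Theorem~\ref{thm:semicompCKI}; directed odd cycles $\overrightarrow{C}_{2k+1}$ are classical CKI digraphs, and the arc-locally semicomplete condition is vacuous for them since every vertex has a unique in-neighbor and a unique out-neighbor.

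For the converse, let $D$ be a CKI arc-locally semicomplete digraph. I would first record standard structural consequences of being CKI: $D$ has no kernel and is strongly connected. Then I would split on whether $D$ contains a symmetric arc. If $(u,v), (v,u) \in A(D)$, I would apply the arc-locally semicomplete condition to arcs incident with $\{u,v\}$ and iterate using strong connectivity to propagate adjacencies across $V(D)$, aiming to conclude that $D$ is semicomplete; Theorem~\ref{thm:semicompCKI} would then force $D$ to be a directed antihole. If instead $D$ is asymmetric, I would take a shortest odd cycle $C = v_0 v_1 \cdots v_{2k} v_0$ in $D$ (which exists by Richardson's theorem, since $D$ has no kernel, and is induced by the minimality of its length), and aim to show that $V(D) = V(C)$, giving that $D$ is a directed odd cycle.

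The hard part will be the asymmetric case. Assume toward contradiction that there is a vertex $w \notin V(C)$. Since $D$ is strongly connected, $w$ sends or receives arcs to or from $V(C)$, and I would carry out a case analysis on precisely how $w$ attaches to $C$. For each configuration, I would apply the arc-locally semicomplete property to pairs consisting of an arc of $C$ and an arc between $w$ and $V(C)$, with the goal of either forcing a symmetric arc (contradicting asymmetry) or producing from a kernel of $D-w$ (which exists since $D$ is CKI) a kernel of $D$ itself (contradicting that $D$ is CKI). The technical heart of the argument is this case analysis, which must handle every possible pattern of in- and out-neighbors of $w$ on $C$. As a sanity check along the way, one should verify that $\overrightarrow{C}_7(1,2)$, which appeared in Theorem~\ref{thm:ckilocalsemi} for locally semicomplete digraphs, is \emph{not} arc-locally semicomplete, explaining why no analogous exceptional example appears in the present statement.
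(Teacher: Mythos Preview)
This theorem is not proved in the present paper. It is quoted from Galeana-S\'anchez~\cite{galeanaDM306} as part of the survey of known results in the introduction, and no argument for it is given here; consequently there is no ``paper's own proof'' to compare your proposal against.

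As for the proposal itself, it is only a plan, and the symmetric-arc branch is where it is thinnest. Your intention is to show that a single symmetric arc, propagated through the arc-locally semicomplete condition along a strongly connected digraph, forces $D$ to be semicomplete. But arc-locally semicomplete digraphs were introduced precisely as a common generalization of semicomplete \emph{and semicomplete bipartite} digraphs, and the latter can certainly contain symmetric arcs without being semicomplete. So the propagation you envisage cannot succeed from the arc-local hypothesis alone; you will have to weave in the CKI hypothesis (for instance, that $D$ contains no induced $\overrightarrow{C}_3$ and has no kernel) in an essential way at that step, and you have not indicated how. The asymmetric branch (shortest odd cycle plus attachment analysis of an external vertex) is a standard and reasonable line, but again the case analysis is where all the content lies and none of it is carried out. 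In short, the outline is plausible but the two places that require real work are left as promises.
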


A digraph is \emph{arc-locally in-semicomplete} (\emph{arc-locally
out-semicomplete}) if for any pair of adjacent vertices $x$ and $y$, every
in-neighbour (out-neighbour) $z$ of $x$ is adjacent to every in-neighbour
(out-neighbour) $w$ of $y$, or $w = z$.   Wang characterized in
\cite{wangGCOM35} critical kernel imperfect digraphs which are arc-locally
(out-)in-semicomplete.

\begin{theorem}
\cite{wangGCOM35}    
    Let $D$ be an arc-locally (out-)in-semicomplete digraph.   If $D$ is a
    critical kernel imperfect digraph, then $D$ is a directed odd cycle or a
    directed antihole.
\end{theorem}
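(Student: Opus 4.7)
The plan is to show that any critical kernel imperfect arc-locally in-semicomplete digraph $D$ is either a directed odd cycle or a semicomplete digraph; once $D$ is shown to be semicomplete, Theorem~\ref{thm:semicompCKI} gives that $D$ is a directed antihole. The arc-locally out-semicomplete case is structurally parallel, so I will only describe the in-semicomplete case here.

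I would begin by recording the standard CKI toolkit: $D$ is strongly connected, and for every vertex $v$ any kernel $K_v$ of $D - v$ (which exists since $D$ is CKI) must satisfy $N^+(v) \cap K_v = \emptyset$, for otherwise $K_v$ absorbs $v$ and is a kernel of $D$. The main analysis then splits according to whether $D$ contains a directed $3$-cycle. If $T = v_0 v_1 v_2 v_0$ is a $3$-cycle in $D$, I would show that $D$ is semicomplete by fixing any vertex $u \notin V(T)$ and applying the arc-local condition to each arc of $T$: given an arc from $u$ into some $v_i$, the fact that $v_{i-1} \in N^-(v_i)$ forces $u$ and $v_{i-1}$ to be adjacent; rotating around the triangle and combining with strong connectivity yields adjacency of $u$ with every vertex of $T$, so $D$ is semicomplete and Theorem~\ref{thm:semicompCKI} concludes.

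If $D$ has no $3$-cycle, let $C = w_0 w_1 \cdots w_{k-1} w_0$ be a shortest cycle, with $k \ge 4$. The goal is to prove $V(D) = V(C)$ and that $C$ is chordless. For any putative chord $w_i w_j$ or external vertex $u$, a careful application of the arc-local condition to pairs of consecutive arcs of $C$ either produces a cycle shorter than $C$, contradicting minimality, or yields a kernel of $D$ obtained by augmenting some $K_v$ with $v$, contradicting CKI. Since directed even cycles are kernel-perfect, $k$ must be odd, so $D \cong \overrightarrow{C}_{2m+1}$ for some $m \ge 1$. The main obstacle here is that the arc-local in-semicomplete condition carries little direct structural information in the absence of short cycles; extracting its content requires combining it with the non-absorbing property of $v$ relative to $K_v$, and ruling out external vertices will likely demand a fine case analysis on the neighbourhood of such a vertex with respect to $C$, together with a careful choice of $v$ whose associated $K_v$ can be enlarged into a forbidden kernel of $D$.
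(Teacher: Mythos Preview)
The paper does not prove this theorem; it is stated as a citation from \cite{wangGCOM35}, so there is no in-paper argument to compare yours against. Wang's proof in the cited reference proceeds via a structural description of strong arc-locally in-/out-semicomplete digraphs (essentially: semicomplete, semicomplete bipartite, or a cycle-like extension), from which the CKI classification is read off, rather than by a direct case analysis on a CKI digraph of the kind you outline.

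Your direct approach has a genuine gap in the $3$-cycle case. Showing that every vertex $u\notin V(T)$ is adjacent to all of $V(T)$ does \emph{not} make $D$ semicomplete: two such vertices $u_1,u_2$ need not be adjacent to each other. The arc-local in-condition links $N^-(x)$ with $N^-(y)$ for an adjacent pair $x,y$, but says nothing about two in-neighbours of a single vertex, so from $u_1,u_2\in N^-(v_i)$ alone you cannot conclude that $u_1$ and $u_2$ are adjacent. To continue you would have to produce a $3$-cycle through $u_1$ and iterate, but adjacency of $u_1$ with all of $T$ does not guarantee such a cycle (for instance $u_1$ could dominate every $v_i$ with no arc back). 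Your starting hypothesis ``given an arc from $u$ into some $v_i$'' is likewise not supplied by strong connectivity alone; a propagation argument along a shortest path from $u$ to $T$ is needed before the arc-local condition can be invoked. The no-$3$-cycle case is, as you acknowledge, only a statement of intent; without the structure theorem it is not clear the arc-local condition by itself suffices to close it in the manner you describe.
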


Let $k$ be an integer, with $k \ge 2$. A digraph $D$ is
\textit{$k$-quasi-transitive} if for every pair of vertices $u$ and $v$ of $D$,
the existence of a $k$-path from $u$ to $v$ in $D$ implies that there is an arc
between $u$ and $v$. A \textit{quasi-transitive digraph} is a
$2$-quasi-transitive digraph. A digraph $D$ is \textit{$k$-transitive} if for
every pair of vertices $u$ and $v$ of $D$, the existence of a $k$-path from $u$
to $v$ in $D$ implies $(u,v)$ is an arc in $D$. A \textit{transitive digraph} is
a $2$-transitive digraph. Trivially, every $k$-transitive digraph is a
$k$-quasi-transitive digraph. In particular, every transitive digraph is a
quasi-transitive digraph. Moreover, every semicomplete digraph is a
quasi-transitive digraph.

Quasi-transitive digraphs are also traditionally considered among the
generalizations of semicomplete digraphs. The family of quasi-transitive
digraphs was introduced by Ghouila-Houri in \cite{Ghouila1962} to characterize
comparability graphs as those graphs that admit a quasi-transitive orientation.
In \cite{bang1995JGT}, Bang-Jansen and Huang gave a recursive characterization
of quasi-transitive digraphs, which has led to solutions of many (usually
difficult) problems. In \cite{hernandezDM312}, Hernández-Cruz and
Galeana-Sánchez introduced $k$-transitive and $k$-quasi-transitive digraphs,
generalizing the notions of transitive and quasi-transitive digraphs,
respectively. The properties of quasi-transitive digraphs and their
generalizations have been studied by several authors. In \cite{galeana2018}, a
thorough compendium dedicated to quasi-transitive digraphs and their extensions
can be found.

In \cite{galeanaDMTCS18}, Galeana-S\'anchez and Olsen analyzed those digraphs
having an underlying perfect graph.   Since underlying graphs of
quasi-transitive digraphs are comparability digraphs, the following result
completely characterizes the critical kernel imperfect quasi-transitive
digraphs.

\begin{theorem}\cite{galeanaDMTCS18} Let $D$ be a digraph.  If the underlying
  graph of $D$ is a perfect graph, then $D$ is critical kernel imperfect if and
  only if $D$ is a directed antihole.
\end{theorem}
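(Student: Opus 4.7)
I would handle the two implications separately.

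The backward direction should be immediate. A directed antihole $\overrightarrow{A}_n$ is semicomplete, so its underlying graph is the complete graph $K_n$, which is perfect, and by Theorem \ref{thm:semicompCKI} it is critical kernel imperfect.

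For the forward direction, assume $D$ is critical kernel imperfect and its underlying graph $G$ is perfect. The central tool I plan to invoke is the Boros--Gurvich theorem, which resolves the Berge--Duchet conjecture: every clique-acyclic superorientation of a perfect graph has a kernel. (Recall that a digraph $D$ with underlying graph $G$ is \emph{clique-acyclic} if $D[K]$ has a kernel for every clique $K$ of $G$, and that a \emph{superorientation} of $G$ is any digraph whose underlying graph is exactly $G$.) Since $D$ is a superorientation of $G$ and, being CKI, has no kernel, $D$ fails to be clique-acyclic; hence there exists a clique $K$ of $G$ such that $D[K]$ has no kernel.

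If $K$ were a proper subset of $V(D)$, then $D[K]$ would be a proper induced subdigraph of $D$, and the CKI hypothesis would force $D[K]$ to have a kernel, contradicting the choice of $K$. Therefore $K=V(D)$, so $G$ is complete and $D$ is semicomplete. Applying Theorem \ref{thm:semicompCKI} then yields that $D$ is a directed antihole, completing the proof.

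The main obstacle is just the invocation of the Boros--Gurvich theorem, which is itself a deep result about perfect graphs; with that in hand, the argument reduces essentially to the semicomplete case already treated in Theorem \ref{thm:semicompCKI}, and the only small subtlety is observing that a clique witnessing failure of clique-acyclicity in a CKI digraph must exhaust the vertex set.
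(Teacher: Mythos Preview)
Your argument is correct. The backward direction is immediate as you describe, and for the forward direction the reduction via the Boros--Gurvich theorem (perfect graphs are kernel-solvable) is exactly the right tool: since $D$ has no kernel it cannot be clique-acyclic, the offending clique must be all of $V(D)$ by criticality, and then Theorem~\ref{thm:semicompCKI} finishes.

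As for comparison with the paper: note that this theorem is \emph{not} proved in the present paper at all. It is stated in the introductory survey section as a result quoted from \cite{galeanaDMTCS18}, with no accompanying proof. So there is nothing in this paper to compare your argument against. That said, your route through Boros--Gurvich is the standard one for this statement and is essentially how the result is obtained in the original reference; the point of citing it here is precisely that it settles the quasi-transitive case (since underlying graphs of quasi-transitive digraphs are comparability graphs, hence perfect), and the paper then moves on to families where the underlying graph need not be perfect.
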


Also in \cite{galeanaDMTCS18}, Galeana-S\'anchez and Olsen studied asymmetric
critical kernel imperfect $3$-quasi-transitive digraphs.   The analysis for the
general case (not necessarily asymmetric) was finished by Wang in
\cite{wangGCOM35}.

\begin{theorem}
\cite{wangGCOM35}    
    Let $D$ be a $3$-quasi-transitive digraph.   If $D$ is a critical kernel
    imperfect digraph, then $D$ is a directed antihole.
\end{theorem}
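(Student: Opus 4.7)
The plan is to reduce to Theorem~\ref{thm:semicompCKI} by showing that any critical kernel imperfect $3$-quasi-transitive digraph $D$ is semicomplete. I would begin with the two standard facts about any CKI digraph $D$: it is strongly connected (otherwise a kernel of a terminal strong component could be extended greedily to a kernel of $D$), and it contains an odd directed cycle (otherwise Richardson's theorem provides a kernel).

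Let $C = v_0 v_1 \cdots v_{\ell - 1} v_0$ be a shortest odd directed cycle in $D$, with $\ell$ odd and $\ell \ge 3$. Applying $3$-quasi-transitivity to each $3$-path $v_i v_{i+1} v_{i+2} v_{i+3}$ (indices mod $\ell$) yields an arc between $v_i$ and $v_{i+3}$. For $\ell \ge 7$ the chord $v_i v_{i+3}$ (in either orientation) creates a strictly shorter odd directed cycle, contradicting minimality of $\ell$; hence $\ell \in \{3, 5\}$. A careful chord analysis, iteratively reapplying $3$-quasi-transitivity on the new $3$-paths produced by the chords, then forces $D[V(C)]$ to be semicomplete.

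The next step is to show $V(C) = V(D)$, and here I would lean heavily on the per-vertex characterisation of CKI digraphs: for every $v \in V(D)$ there is a kernel $K_v$ of $D - v$ such that $v$ has no out-neighbour but does have an in-neighbour in $K_v$. For any $w \in V(D) \setminus V(C)$, strong connectivity gives $3$-paths between $w$ and vertices of $C$ in both directions; $3$-quasi-transitivity then turns these into arcs, and the CKI restriction applied to the kernel $K_w$ either yields a kernel of $D$ (a contradiction) or an odd directed cycle shorter than $C$ (also a contradiction). Hence $D$ is semicomplete and Theorem~\ref{thm:semicompCKI} concludes that $D$ is a directed antihole.

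The main obstacle is the extension step: promoting a dense semicomplete structure on $V(C)$ to the whole digraph. The $3$-quasi-transitive hypothesis alone does not force adjacency between vertices not joined by a $3$-path, so one must combine it with the CKI kernel-per-vertex property in a fairly delicate case analysis. I expect no single unifying trick; rather, careful bookkeeping over the possible orientations of arcs between $w$ and $V(C)$, together with minimality of $\ell$, should handle each configuration.
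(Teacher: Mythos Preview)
This theorem is not proved in the present paper; it is quoted from Wang~\cite{wangGCOM35} as part of the introductory survey, with no argument given here. So there is no ``paper's own proof'' to compare against. What I can do is assess your outline on its own and contrast it with the approach that underlies Wang's result.

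Your proposal has two genuine gaps. First, the claim that for a shortest odd cycle of length $\ell\ge 7$ the chord between $v_i$ and $v_{i+3}$ produces a strictly shorter odd cycle \emph{in either orientation} is false as stated: the orientation $(v_{i+3},v_i)$ only yields the $4$-cycle $v_i v_{i+1} v_{i+2} v_{i+3} v_i$. One can recover by iterating, but this already signals that the ``careful chord analysis'' is doing real work you have not supplied. Second, and more seriously, the extension step from $V(C)$ to $V(D)$ is essentially missing. Strong connectivity does \emph{not} by itself produce $3$-paths between $w$ and $V(C)$; distances may be $1$, $2$, or larger, and $3$-quasi-transitivity gives no leverage on paths of length $\ne 3$. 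Your own final paragraph acknowledges that this is the crux and that you have no mechanism beyond ``careful bookkeeping'', which is not a proof.

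The route actually taken in the literature avoids this entirely by invoking the known structural characterisation of strong $3$-quasi-transitive digraphs: every such digraph is semicomplete, semicomplete bipartite, or belongs to a small explicit family. A CKI digraph is strong, and bipartite digraphs are kernel-perfect, so the bipartite and exceptional cases are eliminated immediately; one is left with the semicomplete case, and Theorem~\ref{thm:semicompCKI} finishes. That structural theorem is the key lemma your outline is missing, and without it the extension step has no foundation.
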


Recently, $k$-anti-transitive digraphs have gained popularity in the context of
Seymour's Second Neighbourhood Conjecture.   A digraph $D$ is
\textit{$k$-anti-transitive} if the existence of a $k$-path from vertex $u$ to
vertex $v$ implies that $(u,v)$ is not an arc of $D$.   Notice that
$2$-anti-transitive digraphs arise naturally as those digraphs not having a
transitive tournament on $3$ vertices as a subgraph.   Seymour's Second
Neighbourhood Conjecture has been proved for $2$-anti-transitive digraphs
\cite{brantnerI2}, $3$- and $4$-anti-transitive digraphs \cite{daamouchDAM304},
$5$-anti-transitive oriented graphs \cite{daamouchDAM285}, and
$6$-anti-transitive oriented graphs \cite{hassanDAM292}.   It is surprinsing
that these nice results have been proved, even in the abscence of a description
of the global structure of $k$-anti-transitive digraphs.   In the present work,
we characterize critical kernel imperfect digraphs for some subclasses of $2$-
and $4$-anti-transitive digraphs, providing further evidence that these families
may have a rich structure which is worth studying.

In this work, we characterize critical kernel imperfect digraphs in some
well-stablished families of digraphs.   In the context of $k$-quasi-transitive
digraphs, we prove that there are exactly two asymmetrical $4$-quasi-transitive
digraphs, namely $\overrightarrow{A_3}$ and $\overrightarrow{C_5}$. Afterwards,
the general case for $4$-transitive digraphs is considered, and we show that the
only $4$-transitive critical kernel imperfect digraphs are
$\overrightarrow{A_3}$ and $\overrightarrow{A_4}$. Finally, we consider
anti-transitive digraphs, showing that there are unique critical kernel
imperfect digraphs for the families of the $2$-anti-transitive digraphs of
diameter $2$, and the asymmetrical $4$-anti-transitive digraphs of diameter $3$.

The rest of the article is organized as follows.   The remaining of this section
is devoted to introduce additional notation and state some basic known results
on kernel-perfect and critical kernel imperfect digraphs.   In Section
\ref{sec:4qt}, a characterization of asymmetric critical kernel imperfect
$4$-quasi-transitive digraphs is presented.   Critical kernel imperfect
$4$-transitive digraphs are characterized in Section \ref{sec:4t}.
Antitransitive digraphs are considered in Section \ref{sec:anti}; critical
kernel imperfect $2$-anti-transitive and $4$-anti-transitive with diameter $3$
are characterized.   Conclusions and open problems are presented in the final
section.

If $X$ and $Y$ are disjoint subsets of $V(D)$, then $X \to_D Y$ means that
$(x,y)$ is an arc of $D$ for every vertex $x$ of $X$ and every vertex $y$ of
$Y$. If $X  \to_D Y$ and $Y  \to_D X$, then we write $X \leftrightarrow_D Y$.
Whenever $X=\{ x \}$ or $Y=\{ y \}$, we write $x \to_D Y$ or $X  \to_D y$
instead of $\{ x \}  \to_D Y$ or $X  \to_D \{ y \}$, respectively. If $X  \to_D
Y$ and there is no arc from $Y$ to $X$, then we write $X \mapsto_D Y$. When
there is no ambiguity, we will omit the subscript $D$.

It is clear from the definition that critical kernel imperfect digraphs are
minimal digraphs without a kernel, this is, every digraph without a kernel
contains a critical kernel imperfect digraph. From this observation, the
following proposition, which will be used extensively in the remaining of this
work without explicit mention, is easily obtained.

\begin{proposition}
If $D$ is a critical kernel imperfect or a kernel-perfect digraph, then $D$ does
not properly contain a critical kernel imperfect subdigraph.
\end{proposition}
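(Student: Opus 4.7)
The plan is to unfold the definitions and observe that the conclusion is almost immediate. Both classes named in the hypothesis (critical kernel imperfect and kernel-perfect) are defined so that \emph{every} proper induced subdigraph has a kernel; the two classes differ only in whether the whole digraph has one. Meanwhile, being critical kernel imperfect requires, by definition, the absence of a kernel. These two facts together should close the argument.

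More concretely, I would argue as follows. Let $D$ be a digraph that is either critical kernel imperfect or kernel-perfect, and suppose for a contradiction that $D$ properly contains a critical kernel imperfect subdigraph $D'$. Since $D'$ is a proper induced subdigraph of $D$, the hypothesis on $D$ yields, in either case, that $D'$ has a kernel. On the other hand, $D'$ being critical kernel imperfect means, by definition, that $D'$ has no kernel. This contradiction shows that no such $D'$ exists.

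The only point that might require a brief comment is what is meant by "properly contain" in this context; given the conventions fixed earlier in the paper (where "$D[S]$" denotes induced subdigraphs and the CKI/KP definitions refer to \emph{proper induced} subdigraphs), it is natural to interpret "properly contains" as "contains as a proper induced subdigraph," and with that reading the argument above is complete.

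There is no real obstacle here: the statement is essentially a reformulation of the definitions, and it simply records, for later use, the familiar fact that CKI digraphs are precisely the \emph{minimal} induced subdigraphs without a kernel. The only care needed is to keep straight that the definition of CKI (and of KP) is a hereditary condition on \emph{proper} subdigraphs, so the conclusion does not assert anything about $D$ itself being non-CKI, only about its proper induced subdigraphs.
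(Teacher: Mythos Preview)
Your argument is correct and is exactly what the paper intends: it states only that the proposition ``is easily obtained'' from the observation that critical kernel imperfect digraphs are the minimal digraphs without a kernel, which is precisely the definition-unfolding you carry out. There is nothing to add.
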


To finish this section, we recall a result of Berge and Duchet from
\cite{BergeDuchet}, which will also be used many times without explicit mention.

\begin{theorem}\cite{BergeDuchet} If $D$ is a critical kernel imperfect digraph,
  then $D$ is strong.
\end{theorem}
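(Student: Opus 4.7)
The plan is to argue by contradiction: assume $D$ is critical kernel imperfect but not strong, and build a kernel of $D$ from kernels of smaller induced subdigraphs, contradicting the hypothesis that $D$ lacks one.

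First, I would consider the condensation of $D$. Since $D$ is not strong, this condensation is a nontrivial acyclic digraph, hence contains a terminal (sink) strong component $C$, that is, a component $C$ such that no arc of $D$ leaves $C$. Because $C \subsetneq V(D)$, the induced subdigraph $D[C]$ is a proper induced subdigraph of $D$, so by the definition of critical kernel imperfectness it admits a kernel $K$. Note $K \neq \emptyset$ since $C \neq \emptyset$.

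Next, I would define $N^-(K) = \{ v \in V(D) \setminus K \colon (v,k) \in A(D) \text{ for some } k \in K \}$, and look at $D' = D[V(D) \setminus (K \cup N^-(K))]$. First I must verify that $V(D')$ is a proper subset of $V(D)$ (clear, since $K \neq \emptyset$) and nonempty: otherwise $K \cup N^-(K) = V(D)$, which would make $K$ itself a kernel of $D$, contradicting that $D$ is critical kernel imperfect. Thus $D'$ is a nonempty proper induced subdigraph, and admits a kernel $K'$.

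Finally, I would show $K \cup K'$ is a kernel of $D$. For absorbency, any $v \notin K \cup K'$ lies either in $N^-(K)$, and is absorbed by $K$, or in $V(D') \setminus K'$, and is absorbed by $K'$. For independence, $K$ and $K'$ are each independent, so the only issue is arcs between them. No arc goes from $K$ to $K'$, since $K \subseteq C$, $K' \cap C = \emptyset$, and $C$ is terminal. No arc goes from $K'$ to $K$, since any such vertex of $K'$ would lie in $N^-(K)$ and thus be excluded from $V(D')$. Therefore $K \cup K'$ is a kernel of $D$, contradicting the assumption.

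The main delicate point is confirming that $D'$ is nonempty; everything else follows cleanly from the terminal-component property. The overall strategy is essentially a ``peel off a kernel from a sink component and recurse'' argument, standard once one has noticed that terminal strong components exist.
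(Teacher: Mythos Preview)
Your argument is correct and is essentially the classical one. Note, however, that the paper does not supply its own proof of this theorem: it is quoted as a known result of Berge and Duchet and used freely thereafter. So there is no ``paper's proof'' to compare against; your write-up simply fills in the standard justification.

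One small point worth making explicit: when you assert $K' \cap C = \varnothing$, this follows because $K$ is a kernel of $D[C]$, so every vertex of $C \setminus K$ has an out-neighbour in $K$ and hence lies in $N^-(K)$; thus $C \subseteq K \cup N^-(K)$ and $V(D') \cap C = \varnothing$. You rely on this but do not state it, and it is the reason the ``no arc from $K$ to $K'$'' step goes through. With that line added, the proof is complete.
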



\section{$4$-quasi-transitive digraphs}
\label{sec:4qt}

The following results describe the structure of $k$-quasi-transitive digraphs
with diameter at least $k + 2$. In the first of the results, the case when $k$
is even, is due to Wang and Zhang \cite{wangDM16}, and the case when $k$ is odd
is due to Alva-Samos and Hernández-Cruz \cite{alvaJGT96}.

\begin{theorem}\cite{alvaJGT96, wangDM16}
\label{thm:kqtlargediam}
  Let $k \ge 3$ be an integer and let $D$ be a $k$-quasi-transitive digraph.
  Suppose that $P = (x_0, \dots, x_r)$ is a shortest path from $x_0$ to $x_r$
  in $D$, with $r \ge k + 2$.
  \begin{enumerate}
    \item If $k$ is even, then $D[V(P)]$ is a semicomplete digraph and $x_j \to
      x_i$ for $1 \le i + 1 < j \le r$.

    \item If $k$ is odd, then $D[V(P)]$ is either a semicomplete digraph and
      $x_j \to x_i$ for $1 \le i + 1 < j \le r$, or $D[V (P)]$ is a semicomplete
      bipartite digraph and $x_j \to x_i$ for $1 \le i + 1 < j \le r$ and $i
      \not\equiv j$ (mod $2$).
  \end{enumerate}
\end{theorem}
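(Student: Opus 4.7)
The plan rests on two elementary observations. First, since $P$ is a shortest $(x_0,x_r)$-path, no arc $x_i \to x_j$ with $j > i+1$ can exist in $D$, for such an arc would shortcut $P$. Second, $k$-quasi-transitivity says that whenever a $k$-path from $x_j$ to $x_i$ exists in $D$ with $j > i+1$, some arc joins $x_i$ and $x_j$, and by the first observation it must be the backward arc $x_j \to x_i$. Hence the entire task reduces to exhibiting enough $k$-paths among the vertices of $P$.

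The starting point is the family of \emph{long backward arcs}: for every $0 \le i \le r-k$, the subpath $(x_i, x_{i+1}, \ldots, x_{i+k})$ is itself a $k$-path, so $x_{i+k} \to x_i$. Since $r \ge k+2$, several such arcs exist and, together with the forward arcs of $P$, they build $(k+1)$-cycles $C_i\colon x_i x_{i+1} \cdots x_{i+k} x_i$ inside $D[V(P)]$. I would then combine one long backward arc with a forward segment of $P$ (or, inductively, with a backward arc already derived) to assemble $k$-paths at other jump distances. A careful bookkeeping on which values of $j-i$ are covered, enlarging the set round by round from the base case $j-i=k$, gives a $k$-path from $x_j$ to $x_i$ for every pair with $j>i+1$ in the even case, which forces the semicomplete structure of~(1).

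The odd case is where a parity analysis enters. When $k$ is odd, the indices $i$ and $i+k$ have opposite parities, so both the forward arcs of $P$ and the long backward arcs produced above cross the bipartition of $V(P)$ by index parity. As long as the iterative extension uses only these arcs, every $k$-path it produces joins two indices of opposite parity, and the only arcs it forces also do so: this is precisely the semicomplete bipartite alternative of~(2). If on the other hand some $k$-path of same-parity endpoints arises inside $D[V(P)]$, one can bootstrap the argument to cover every pair at distance $\ge 2$ exactly as in the even case, yielding the semicomplete alternative of~(2). The dichotomy in~(2) is therefore governed by whether the bipartition by index parity is preserved throughout the extension process.

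The main obstacle is the combinatorial bookkeeping in the iterative step: for each pair $(x_j,x_i)$ one must produce an explicit $k$-path from the arcs already established, and the induction has to be set up so that the growing set of jump distances really covers everything (in the even case) or exactly the opposite-parity distances (in the bipartite branch of the odd case). The most delicate point is verifying the parity dichotomy for odd $k$, i.e.\ ruling out any intermediate configuration that is neither semicomplete nor semicomplete bipartite on $V(P)$.
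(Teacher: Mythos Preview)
The paper does not contain a proof of this theorem: it is quoted from \cite{alvaJGT96,wangDM16} and used as a black box, so there is no ``paper's own proof'' to compare against.

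That said, your outline is the right strategy and is, in broad strokes, the approach taken in the cited references: exploit that $P$ is shortest to exclude forward shortcuts, obtain the base family of backward arcs $x_{i+k}\to x_i$ from the $k$-paths along $P$, and then iterate to force further backward arcs, with a parity split when $k$ is odd. What you have written, however, is a plan rather than a proof. The entire content of the theorem lies in the ``careful bookkeeping'' you defer: one must actually exhibit, for each target pair $(x_j,x_i)$, a $k$-path built from previously established arcs, and organise the induction so that the set of realised jump distances $j-i$ grows monotonically until it covers $\{2,\dots,r\}$ (even $k$) or exactly the odd values in that range (bipartite branch, odd $k$). In the cited papers this is done via an explicit sequence of claims, and several of the constructions are not obvious; in particular, getting from jump $k$ down to small jumps requires combining a backward arc with a forward segment and sometimes a second backward arc in a specific order, and the case $r=k+2$ (the minimum allowed) needs separate attention because fewer base arcs are available. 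Your sketch also leaves the odd-$k$ dichotomy at the level of ``if a same-parity $k$-path arises, bootstrap''; the actual argument must show that a single arc between same-parity vertices suffices to trigger the full semicomplete conclusion, which again requires concrete $k$-path constructions. So the direction is correct, but as written there is no verifiable argument yet.
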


\begin{lemma}\cite{wangDM16}
\label{lem:kqtwang}
  Let $k$ be an even integer with $k\ge 4$, and let $D$ be a strong
  $k$-quasi-transitive digraph. Suppose that $P=(v_0,v_1,\dots,v_{k+2})$ is a
  shortest path from $v_0$ to $v_{k+2}$ in $D$. For any $v \in V (D)-V (P)$, if
  $(v,V (P)) \ne \varnothing$ and $(V(P), v) \ne \varnothing$, then either $v$
  is adjacent to every vertex of $V(P)$, or $\{ v_{k+2}, v_{k+1}, v_k,
  v_{k-1}\}\mapsto v \mapsto \{v_0, v_1, v_2, v_3\}$. In particular, if $k = 4$,
  then $v$ is adjacent to every vertex of $V(P)$.
\end{lemma}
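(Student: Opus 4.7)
The plan is to exploit the detailed structure of $D[V(P)]$ given by Theorem \ref{thm:kqtlargediam}(1): $D[V(P)]$ is semicomplete, and in particular $v_j \to v_i$ for all indices with $1 \le i + 1 < j \le k + 2$. These many back arcs, together with the forward arcs of $P$, produce a plentiful supply of $(k-1)$-paths inside $V(P)$ between prescribed endpoints. Prepending or appending an arc incident to $v$ turns them into $k$-paths, and the $k$-quasi-transitive hypothesis then forces adjacencies between $v$ and vertices of $V(P)$.

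My first move would be to fix $a = \min\{i : v \to v_i\}$ and $b = \max\{i : v_i \to v\}$, both well-defined by hypothesis, and suppose that $v$ is not adjacent to every vertex of $V(P)$. I would then show that $v \to v_c$ (with no reverse arc $v_c \to v$) for each $c \in \{0, 1, 2, 3\}$. For each such $c$, I would construct an explicit $k$-path $v \to v_a \to v_{i_1} \to \cdots \to v_{i_{k-1}} = v_c$ by a controlled mixture of back arcs (which jump from index $j$ down to any index at most $j - 2$) and forward arcs (which advance by one), hence forcing $v$ to be adjacent to $v_c$. A further $k$-path argument, leveraging the fact that $P$ is a \emph{shortest} path to rule out forbidden shortcuts through $v$, would then exclude the reverse arc $v_c \to v$. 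A symmetric argument using the in-neighbour $v_b$ gives $v_c \to v$ without its reverse for each $c \in \{k - 1, k, k + 1, k + 2\}$. The only indices left untouched are $c \in \{4, \ldots, k - 2\}$, which is exactly the range permitted by the second alternative of the statement.

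For the special case $k = 4$, the middle set $\{4, \ldots, k - 2\}$ is empty, so $v$ cannot fail to be adjacent to any $v_c$, and we automatically land in the first alternative: $v$ is adjacent to every vertex of $V(P)$. (Equivalently, the two endpoint sets $\{v_3, v_4, v_5, v_6\}$ and $\{v_0, v_1, v_2, v_3\}$ overlap at $v_3$, so the $\mapsto$ alternative is self-contradictory.)

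The main obstacle lies in producing the explicit $k$-paths $v \to v_a \to \cdots \to v_c$: for each admissible pair $(a, c)$ the sequence of intermediate vertices must be distinct, all the required arcs must be present (back arcs exist only for index drops of at least $2$), and when $a$ or $c$ is close to an endpoint of $P$, the stock of back arcs becomes thin. A similar case analysis is needed when ruling out each reverse arc in the $\mapsto$ conclusion, since the $k$-path used to force the contradiction depends delicately on where the relevant indices sit inside $\{0, 1, \ldots, k+2\}$.
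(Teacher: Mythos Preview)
The paper does not contain a proof of this lemma: it is quoted verbatim from Wang and Zhang \cite{wangDM16} and used as a black box (note the citation attached to the lemma environment and the absence of any proof between it and the next cited result, Theorem~\ref{thm:4qtsemicomp}). There is therefore nothing in the present paper to compare your attempt against.

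As for the sketch itself, the overall architecture is reasonable---anchor at the extreme in- and out-neighbours $v_a$, $v_b$ of $v$ on $P$, manufacture $k$-paths through the rich back-arc structure of $D[V(P)]$ provided by Theorem~\ref{thm:kqtlargediam}, and use shortest-path minimality to kill unwanted reverse arcs---and your reading of the $k=4$ case (the two four-element blocks overlap at $v_3$, so the $\mapsto$ alternative is vacuously excluded) is correct. What you have written, however, is a plan rather than a proof: you explicitly flag that the construction of the required $(k-1)$-paths inside $V(P)$ for every pair $(a,c)$, and the separate exclusion of each reverse arc, both require nontrivial case analysis that you have not carried out. In particular, you do not indicate where the hypothesis ``$v$ is not adjacent to every vertex of $V(P)$'' actually enters the argument for $v \mapsto v_c$; without that, one cannot see why the asymmetric conclusion should follow rather than mere adjacency. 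If you intend this as a genuine proof you would need to supply those details, or else defer to \cite{wangDM16} as the paper does.
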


\begin{theorem}\cite{wangDM16}
\label{thm:4qtsemicomp}
  Let $k$ be an even integer with $k \ge 4$, and let $D$ be a strong
  $k$-quasi-transitive digraph. If $P = (v_0, \dots, v_{k+2})$ is a shortest
  path from $v_0$ to $v_{k+2}$, then the subdigraph induced by $V(D)-V(P)$ is a
  semicomplete digraph.
\end{theorem}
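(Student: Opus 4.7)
The plan is to pick arbitrary $u, v \in V(D) - V(P)$ and show that $D$ contains one of the arcs $(u,v)$ or $(v,u)$. The two key ingredients are Lemma \ref{lem:kqtwang}, which structurally constrains the interaction between a vertex outside $V(P)$ and the path $P$, and the $k$-quasi-transitive property itself, which says that any $k$-path from $u$ to $v$ in $D$ produces an arc between $u$ and $v$.

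The first step is to argue that every vertex $w \in V(D) - V(P)$ has both an in-neighbour and an out-neighbour in $V(P)$, so that Lemma \ref{lem:kqtwang} applies to $w$. If, say, $w$ has no out-neighbour in $V(P)$, then using strong connectivity one obtains a shortest path from $w$ into $V(P)$; the existence of such a path together with the $k$-quasi-transitive condition, applied along suitable sub-walks through $P$, should force a contradiction with $d(v_0, v_{k+2}) = k+2$. By reversing the argument, the absence of an in-neighbour in $V(P)$ is excluded similarly. Consequently, Lemma \ref{lem:kqtwang} splits $V(D) - V(P)$ into two types: (i) vertices adjacent to every vertex of $V(P)$, and (ii) vertices $w$ satisfying $\{v_{k+2}, v_{k+1}, v_k, v_{k-1}\} \mapsto w \mapsto \{v_0, v_1, v_2, v_3\}$.

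Next, I would analyse cases based on the types of $u$ and $v$. If both are of type (ii), then $u \to v_1 \to v_2 \to \cdots \to v_{k-1} \to v$ is a $k$-path in $D$ with all vertices distinct, so the $k$-quasi-transitive property immediately forces an arc between $u$ and $v$. If at least one of $u, v$ is of type (i), the directions of the arcs between that vertex and the vertices $v_i$ are further restricted by the shortest-path property of $P$: an arc $(v_j, w)$ yields $d(v_0, w) \leq j+1$, and dually an arc $(w, v_j)$ yields $d(w, v_{k+2}) \leq k+2-j$. Combining these distance bounds with the "adjacent to every $v_i$" hypothesis pins down enough arc orientations to again exhibit a $k$-path in $D$ from $u$ to $v$ (or from $v$ to $u$) through $V(P)$, concluding the argument via $k$-quasi-transitivity.

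The principal obstacle lies in the last paragraph: the case analysis for type (i) vertices requires carefully tracking the arc orientations forced by shortest-path inequalities for each $v_i$, and then selecting intermediate vertices of a walk of length exactly $k$ with no repetitions in each configuration. It may help to first settle the base case $k=4$, where types (i) and (ii) coincide by the last sentence of Lemma \ref{lem:kqtwang}, and then extend the analysis to general even $k \geq 6$.
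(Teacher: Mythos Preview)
This theorem is quoted from \cite{wangDM16} and the present paper gives no proof of it; it is simply cited as a known result and then combined with Theorem~\ref{thm:kqtlargediam} and Lemma~\ref{lem:kqtwang} to derive Corollary~\ref{cor:4qtdiam6}. There is therefore no proof in the paper to compare your proposal against.

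As for the proposal on its own merits: the overall strategy of invoking Lemma~\ref{lem:kqtwang} and then constructing a $k$-path between $u$ and $v$ through $V(P)$ is the natural one, and your type-(ii)/type-(ii) case is correct. The two places where the sketch is genuinely incomplete are precisely the ones you flag. First, the claim that every $w \in V(D)\setminus V(P)$ has both an in- and an out-neighbour on $V(P)$ does not follow from strong connectivity alone; strong connectivity only gives a path from $w$ to $V(P)$, possibly of length greater than one, and ``applied along suitable sub-walks through $P$'' hides a real argument (typically one takes a shortest such path, applies Lemma~\ref{lem:kqtwang} to its penultimate vertex, and propagates adjacency backwards via $k$-quasi-transitivity, but this needs to be written out). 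Second, for a type-(i) vertex $w$ the shortest-path constraint only tells you that one cannot simultaneously have $(v_i,w)$ and $(w,v_j)$ with $j \ge i+2$; turning this into an explicit $k$-path from $u$ to $v$ in every remaining configuration of arc orientations is a genuine case analysis that remains to be carried out. So the outline is sound but is not yet a proof.
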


It follows from \Cref{thm:kqtlargediam,lem:kqtwang,thm:4qtsemicomp} that a
$4$-quasi-transitive digraph of diameter at least $6$ is a semicomplete digraph.
Also, by \Cref{thm:semicompCKI}, the unique semicomplete critical kernel
imperfect digraphs are directed antiholes, which have diameter $2$. Hence, the
following result follows.

\begin{corollary}
\label{cor:4qtdiam6}
  There are no critical kernel imperfect $4$-quasi-transitive digraphs of
  diameter at least 6.
\end{corollary}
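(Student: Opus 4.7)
The plan is to argue by contradiction. Suppose $D$ is a critical kernel imperfect $4$-quasi-transitive digraph whose diameter is at least $6$.

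First, I would invoke the Berge--Duchet theorem to deduce that $D$ is strong. The diameter assumption then provides a shortest path $P = (v_0, v_1, \ldots, v_r)$ in $D$ with $r \ge 6 = k+2$, for $k = 4$.

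Next, I would combine the three structural results to conclude that $D$ itself is semicomplete. \Cref{thm:kqtlargediam} applied with $k = 4$ shows that $D[V(P)]$ is semicomplete. \Cref{thm:4qtsemicomp} shows that $D[V(D) - V(P)]$ is semicomplete. It remains to check that each vertex $v$ of $V(D) - V(P)$ is adjacent to every vertex of $V(P)$. To this end, I would first use the strong connectivity of $D$, together with the semicompleteness of $D[V(D) - V(P)]$, to show that every such $v$ has at least one in-neighbour and at least one out-neighbour in $V(P)$. With that in place, the ``in particular'' clause of \Cref{lem:kqtwang} for $k = 4$ guarantees that $v$ is adjacent to every vertex of $V(P)$. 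Hence $D$ is semicomplete.

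To finish, \Cref{thm:semicompCKI} forces $D$ to be a directed antihole. A straightforward check shows that every directed antihole has diameter at most $2$: from any vertex $i$ the only non-arc is the arc to $i - 1$ (indices modulo $n$), and $i - 1$ is reached via $i \to i + 1 \to i - 1$. This contradicts the assumption that the diameter of $D$ is at least $6$ and completes the argument.

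The main obstacle I anticipate is the intermediate step that $D$ is semicomplete, specifically verifying the hypothesis of \Cref{lem:kqtwang} for every vertex outside $V(P)$; this requires a careful use of the strong connectivity of $D$ together with the rigid structure of $D[V(P)]$ given by \Cref{thm:kqtlargediam} (the backward shortcut arcs $v_j \to v_i$ for $j > i+1$) and the semicompleteness of $D[V(D)-V(P)]$. Once this step is settled, the antihole diameter bound yields the contradiction immediately.
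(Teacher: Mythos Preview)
Your proposal is correct and follows exactly the paper's route: the paragraph preceding the corollary derives it by combining \Cref{thm:kqtlargediam}, \Cref{lem:kqtwang}, and \Cref{thm:4qtsemicomp} to conclude that such a digraph is semicomplete, and then invokes \Cref{thm:semicompCKI} together with the fact that directed antiholes have diameter $2$. The only minor adjustment is that \Cref{lem:kqtwang} and \Cref{thm:4qtsemicomp} are stated for a shortest path of length exactly $k+2=6$, so fix $r=6$ rather than $r\ge 6$; the step you flag as the main obstacle is precisely the one the paper's terse ``it follows from'' leaves implicit.
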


In view of \Cref{cor:4qtdiam6}, and since digraphs with diameter $1$ are
complete, and thus kernel-perfect, we will focus on $4$-quasi-transitive
digraphs of diameter $k$, with $2 \le k \le 5$.

\begin{lemma}
\label{lem:4qtdiam2}
  The unique asymmetrical critical kernel imperfect digraph of diameter 2 is
  $\overrightarrow{A}_3$.
\end{lemma}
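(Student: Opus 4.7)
The plan is to establish both directions. One direction is the routine check that $\overrightarrow{A}_3 = \overrightarrow{C}_3$ is asymmetrical, has diameter $2$, and is CKI: no single vertex absorbs the other two, so there is no kernel, while each proper induced subdigraph (on at most two vertices) obviously has one.

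For the converse, let $D$ be an asymmetrical CKI digraph of diameter $2$. The strategy is to prove that $D$ must contain a directed $3$-cycle. In an asymmetrical digraph every $3$-cycle is necessarily induced, since any chord among its vertices would create a symmetric pair; thus $D$ would contain $\overrightarrow{A}_3$ as an induced subdigraph. Because $\overrightarrow{A}_3$ is itself CKI and $D$ is CKI, the Proposition stated earlier (a CKI digraph cannot properly contain a CKI subdigraph) will then force $D = \overrightarrow{A}_3$.

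To produce the $3$-cycle, I would argue by contradiction. Suppose $D$ has no $3$-cycle; by the Berge--Duchet theorem $D$ is strong. Fix $v \in V(D)$ and write $A = N^+(v)$ and $B = N^-(v)$. Strong connectivity gives $A \ne \varnothing \ne B$, and asymmetry gives $A \cap B = \varnothing$. If there were an arc from some $a \in A$ to some $b \in B$, then $v \to a \to b \to v$ would be a directed $3$-cycle, contradicting the assumption; hence no arc goes from $A$ to $B$. But for any $a \in A$, the distance from $a$ to $v$ is at most $2$ by the diameter hypothesis and cannot equal $1$ (asymmetry, since $v \to a$), so there must be a $2$-path $a \to x \to v$ with $x \in N^-(v) = B$, which is exactly an arc from $A$ to $B$. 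This contradiction finishes the argument.

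I do not anticipate a substantive obstacle: the CKI hypothesis enters only at the very end through the Proposition, and the combinatorial core is the short observation that arcs from $N^+(v)$ to $N^-(v)$ must exist (by diameter $2$) but also cannot exist (they would create $3$-cycles). The only potential slip is to forget that asymmetry makes every $3$-cycle automatically induced, which is what allows the Proposition to be applied directly.
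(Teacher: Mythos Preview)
Your proposal is correct and follows essentially the same approach as the paper: pick a vertex, use strong connectivity to get an out-neighbour, and then use diameter $2$ together with asymmetry to force a $2$-path back, producing a $3$-cycle which is automatically induced by asymmetry; the CKI minimality then pins $D$ down as $\overrightarrow{A}_3$. The only cosmetic differences are that you phrase the core step as a proof by contradiction and that you spell out the easy forward direction, whereas the paper argues directly and omits it.
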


\begin{proof}
Let $D$ be an asymmetrical critical kernel imperfect of diameter $2$, and let
$x$ be a vertex of $D$. Consider $S_1 = \{ v \in V(D) \colon\ d(v,x) = 1 \}$ and
$S_2 = \{ v \in V(D) \colon\ d(v,x) = 2 \}$. Since $D$ does not have a kernel,
$S_2$ is nonempty.  Also, since $D$ is strong, it follows that $N^+(x) \ne
\varnothing$. Moreover, since $D$ is asymmetrical, then $N^+(x) \subseteq S_2$.
Thus, for $w \in N^+(x)$, there is $u \in S_1$ such that $(w,u,x)$ is a path in
$D$. Thus, $(w,u,x,w)$ is a $3$-cycle in $D$, and since $D$ is asymmetrical,
then $D[\{ w,u,x,w\}] \cong \overrightarrow{A}_3$. Since $D$ is critical kernel
imperfect, it follows that $D \cong \overrightarrow{A}_3$.
\end{proof}

\begin{corollary}
Let $D$ be an asymmetrical digraph of diameter 2. If $D$ has no induced copy of
$\overrightarrow{C}_3$ as a subdigraph, then $D$ has a kernel. 
\end{corollary}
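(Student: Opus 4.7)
The plan is to argue by contrapositive and essentially recycle the construction from the proof of Lemma \ref{lem:4qtdiam2}, observing that the CKI hypothesis is only used there to upgrade a local $\overrightarrow{C}_3$ into a global isomorphism, whereas here we only need the local copy.

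First I would suppose, for contradiction, that $D$ is an asymmetrical digraph of diameter $2$ with no kernel. I would pick any vertex $x$ of $D$ and partition the other vertices into $S_1 = \{v \colon d(v,x) = 1\}$ and $S_2 = \{v \colon d(v,x) = 2\}$ (since diameter is $2$, every vertex other than $x$ lies in $S_1 \cup S_2$). The absence of a kernel rules out $\{x\}$ being absorbent, hence forces $S_2 \neq \varnothing$.

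Next I would use asymmetry to locate an arc $(x,w)$ with $w \in S_2$. Since $D$ is strong (or more directly, if $N^+(x) = \varnothing$ then $\{x\}$ is a kernel, contradicting our assumption), there is some $w \in N^+(x)$; and asymmetry forces $(w,x) \notin A(D)$, so $d(w,x) \geq 2$, placing $w \in S_2$. Then by definition of $S_2$ there exists $u \in S_1$ with $(w,u), (u,x) \in A(D)$, so $\{x,w,u\}$ carries the $3$-cycle $(x,w,u,x)$. Because $D$ is asymmetrical, none of the reverse arcs $(w,x)$, $(u,w)$, $(x,u)$ can be present, and therefore $D[\{x,w,u\}] \cong \overrightarrow{C}_3$. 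This contradicts the hypothesis that $D$ is $\overrightarrow{C}_3$-induced-free, completing the argument.

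There is really no serious obstacle here: the entire content sits in the opening paragraph of the proof of \Cref{lem:4qtdiam2}, and the only thing to be careful about is noting that that argument produces an \emph{induced} $\overrightarrow{C}_3$ (which it does, thanks to asymmetry), and that the CKI assumption on $D$ is nowhere needed to extract this induced $3$-cycle. The mildest nuisance is justifying $S_2 \neq \varnothing$ and $N^+(x) \neq \varnothing$ without invoking criticality, but both follow immediately from the absence of a kernel, since otherwise $\{x\}$ would itself be a kernel.
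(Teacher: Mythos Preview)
Your proof is correct and follows exactly the route the paper intends: the corollary is stated without proof precisely because it is an immediate byproduct of the construction in the proof of Lemma~\ref{lem:4qtdiam2}, and you have extracted the induced $\overrightarrow{C}_3$ from that same argument. One small correction: the parenthetical claim that $N^+(x) = \varnothing$ would make $\{x\}$ a kernel is false, since absorbency requires every other vertex to have an arc \emph{into} $x$, which fails for the vertices of $S_2 \ne \varnothing$; however, your primary justification (finite diameter forces $D$ to be strong) is valid, so the argument goes through unchanged.
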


\begin{proposition}
\label{pro:cki4qt2symarc}
There is no $4$-quasi-transitive critical kernel imperfect of diameter 2 such
that every $3$-cycle has at least 2 symmetrical arcs.
\end{proposition}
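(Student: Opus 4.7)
The plan is to argue by contradiction: assume $D$ is a $4$-quasi-transitive critical kernel imperfect digraph of diameter $2$ in which every $3$-cycle has at least two symmetrical arcs. By Berge and Duchet, $D$ is strong. Since \Cref{lem:4qtdiam2} gives $\overrightarrow{A}_3$ as the only asymmetric CKI digraph of diameter $2$ and the unique $3$-cycle of $\overrightarrow{A}_3$ has no symmetric arc, $D$ must contain at least one digon.

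The core idea is to exploit kernels of vertex-deleted subdigraphs. For each $v \in V(D)$, fix a kernel $K_v$ of $D - v$. Standard CKI arguments (using that neither $K_v$ nor $K_v \cup \{v\}$ can be a kernel of $D$) give $N^+(v) \cap K_v = \varnothing$ and $N^-(v) \cap K_v \neq \varnothing$. Fix $k \in N^-(v) \cap K_v$. Since $(v, k) \notin A(D)$ and $D$ has diameter $2$, there is $z$ with $(v, z, k)$ a $2$-path; the $3$-cycle $(v, z, k, v)$ contains the asymmetric arc $(k, v)$, so the standing hypothesis forces both remaining arcs to be symmetric, producing digons $[v, z]$ and $[z, k]$.

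By strong-connectedness pick $w \in N^+(v)$; since $w \notin K_v$, there is $\ell \in K_v$ with $(w, \ell) \in A(D)$. If $(\ell, v) \in A(D)$, the same analysis applied to $(v, w, \ell, v)$ yields digons $[v, w]$ and $[w, \ell]$, so in particular $w \in N^-(v)$. If this outcome were unavoidable for every $v$ and every $w \in N^+(v)$, we would have $N^+(v) \subseteq N^-(v)$ for all $v$, making $D$ symmetric; but every symmetric digraph has a maximal independent set that is a kernel, contradicting CKI. Hence we may fix $v$, some $w \in N^+(v) \setminus N^-(v)$, and $\ell \in K_v$ with $(w, \ell) \in A(D)$ and $(\ell, v) \notin A(D)$.

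To conclude, use diameter $2$ to obtain $u$ with $(\ell, u, v)$ a $2$-path; independence of $K_v$ forces $u \notin K_v$, so $u$ has an out-neighbour $m \in K_v$. If the five vertices $v, w, \ell, u, m$ are distinct, applying $4$-quasi-transitivity to the $4$-path $v \to w \to \ell \to u \to m$ gives $(v, m) \in A(D)$, contradicting $m \in K_v$ and $N^+(v) \cap K_v = \varnothing$. The choice $w \notin N^-(v)$ makes $w \neq u$ automatic, leaving only the degeneracy $\ell = m$ (i.e. $u$'s sole out-neighbour in $K_v$ is $\ell$) to handle; this is the main obstacle. To resolve it I would use the digons $[v, z]$ and $[z, k]$ together with a $2$-path from $k$ to $\ell$ (provided by $d(k, \ell) = 2$, since $k, \ell$ are distinct and non-adjacent members of $K_v$) to build an alternative $4$-walk $v \to z \to k \to a \to \ell$. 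The only possible coincidence along this walk is $a = z$, which forces $(z, \ell) \in A(D)$; the symmetry hypothesis applied to the new $3$-cycle then either promotes the arc $(z, \ell)$ to a digon (enabling the $4$-path $v \to w \to \ell \to z \to k$, which again collides with $4$-quasi-transitivity since $k \in K_v$), or else produces further digons around the intermediate vertex of a $2$-path from $\ell$ to $z$; after a bounded number of such iterations one exhibits a genuine $4$-path from $v$ to some vertex of $K_v$, delivering the required contradiction.
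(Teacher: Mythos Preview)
Your argument has a real gap at its central step. Applying $4$-quasi-transitivity to the path $v \to w \to \ell \to u \to m$ only yields that \emph{some} arc joins $v$ and $m$; it does not force $(v,m)\in A(D)$. The alternative $(m,v)\in A(D)$ is entirely consistent with $m\in K_v$ --- your own vertex $k$ witnesses $K_v\cap N^-(v)\ne\varnothing$ --- and produces no contradiction. So in the ``generic'' case (five distinct vertices, $m\ne\ell$) you have not finished; the argument only bites when the terminal vertex of the $4$-path is known \emph{a priori} not to dominate $v$.

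The same defect undermines the fallback. Your alternative walk $v\to z\to k\to a\to\ell$ does succeed when $a\ne z$, precisely because $\ell$ was chosen with $(\ell,v)\notin A(D)$; but in the sub-case $a=z$ you eventually appeal to a $4$-path $v\to w\to\ell\to z\to k$ ending at $k$, and there $4$-quasi-transitivity is vacuous since $(k,v)$ is already an arc. The closing sentence ``after a bounded number of such iterations one exhibits a genuine $4$-path from $v$ to some vertex of $K_v$'' is a hope, not a proof: such a path is useful only when its endpoint lies in $K_v\setminus N^-(v)$, and you give no mechanism guaranteeing the iteration lands there rather than cycling among vertices of $K_v\cap N^-(v)$. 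The paper's proof confronts exactly this dichotomy by partitioning $K_v$ into $S_1=K_v\cap N^-(v)$ and $S_2=K_v\setminus S_1$, first using a targeted $4$-path (built from the digons your Claim also finds) to prove $S_2=\varnothing$, and then --- rather than chasing a direct contradiction --- showing that the set of vertices not absorbed by $\{v\}$ collapses to a single vertex $u$, so that $\{v,u\}$ is a kernel of $D$.
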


\begin{proof}
Proceeding by contradiction, suppose that there is $D$ a critical kernel
imperfect $4$-quasi-transitive digraph of diameter $2$ such that every $3$-cycle
has at least $2$ symmetrical arcs.

Let $v \in V(D)$ and $D_1 = D-v$. Since $D$ is a critical kernel imperfect, then
$D_1$ has a kernel $N'$. Observe that $N'\cup \{ v\}$ is not an independent set,
otherwise $N'\cup \{ v\}$ would be a kernel of $D$. Moreover, if there is an arc
from $v$ to some vertex of $N'$, then $N'$ is a kernel of $D$, which is
impossible. Thus, we can assume that there is a vertex $z$ of $N'$ such that
$(z,v)\in A(D)$ and there is no arc from $v$ to some vertex in $N'$.

Let $S_1$ and $S_2$ be defined as $S_1 = \{ z \in N' \colon\ d(z,v) = 1 \}$ and
$S_2 = N'-S_1$. By definition $S_2 \cup \{v\}$ is an independent set. Consider,
$B = \{ u \in V(D) \colon\ u  \to_D z, u  \to_{D^c} v \text{ and } u \to_{D^c}
x, \text{ for some } z \in S_1 \text{ and for all } x \in S_2 \}$, note that $B
\ne \varnothing$, otherwise $S_2 \cup \{v\}$ is a kernel of $D$. Moreover, let
$S_0$ be the subset of $S_1$ such that $S_0 = \{ z \in S_1 \colon\ \text{there
is } u \in B \text{ such that } u \to_D z \}$, observe that $S_0 \ne
\varnothing$.

\textbf{Claim 1}. If $v \to_D w$ and there is $z \in S_1$ such that $w \to_D z$,
then $v \leftrightarrow_D w$ and $w \leftrightarrow_D z$.

\noindent \textit{Proof of Claim 1.} Let $w \in N^+(v)$ and $z \in S_1$ such
that $w \to_D z$. It follows that $(v,w,z,v)$ is a 3-cycle of $D$. Even more, by
hypothesis, and since $z \mapsto v$, then $v \leftrightarrow_D w$ and $w
\leftrightarrow_D z$. This ends the proof of Claim 1.

As a consequence of Claim 1, and by definition of $B$, we have that if $u \in
B$, then $\{v,u\}$ is an independent set.

\textbf{Claim 2}. $S_2=\varnothing$.

\noindent \textit{Proof of Claim 2.} Suppose, for the sake of contradiction,
that there is $x \in S_2$. Since the diameter of $D$ is $2$, then there is $y
\in V(D)$ such that $(v,y,x)$ is a path in $D$. As $N'$ is a kernel and by
definition of $B$, it follows that $y \notin S_1$ and $y \notin B$. Let $u \in
B$.  It follows that there is $z \in S_0$ such that $u \to_D z$.  Moreover,
$(u,z,v,y,x)$ is a $4$-path in $D$. Since $D$ is $4$-quasi-transitive, then
there is an arc between $u$ and $x$. By definition of $B$, $(u,x) \notin A(D)$,
thus $x \to_D u$. Now, since the diameter of $D$ is 2, then there is $w \in
V(D)$ such that $(v,w,z)$ is a path in $D$. By Claim 1, we have that $v
\leftrightarrow_D w$ and $w \leftrightarrow_D z$. Hence, $(x,u,z,w,v)$ is a
$4$-path in $D$, and by hypothesis there is an arc between $x$ and $v$, which is
impossible by definition of $S_2$. Therefore $S_2=\varnothing$. This ends the
proof of Claim 2.

\textbf{Claim 3}. If $x \in V(D) \setminus(\{ v\} \cup S_0 \cup B)$, then $x
\to_D v$.

\noindent \textit{Proof of Claim 3.} Let $x \in V(D) \setminus (\{ v\} \cup S_0
\cup B)$. On the one hand, if $x \in N'$, then, by Claim 2, we have $x \in S_1
\setminus S_0$. Thus $x \to_D v$. On the other hand, if $x \in V(D) \setminus
N'$, then there is $y \in N'$ such that $x  \to_D y$. By Claim 2, we conclude $y
\in S_1$. Since $x \notin B$, it follows that $x \to_D v$. This ends the proof
of Claim 3.

\textbf{Claim 4}. $|S_0|=1$.

\noindent \textit{Proof of Claim 4.} Observe that $S_0 \ne \varnothing$.
Proceeding by contradiction suppose that $|S_0| \ge 2$. Let $z_1,z_2 \in S_0$.
By definition of $S_1$, $z_i \mapsto v$ with $i \in \{1,2\}$. Since the diameter
of $D$ is 2, then there is $w_i \in V(D)$ such that $(v,w_i,z_i)$ is a path in
$D$, for each $i \in \{1,2\}$. In addition, by Claim 1, we have that $v
\leftrightarrow_D w_i$ and $w_i \leftrightarrow_D z_i$. If $w_1 \ne w_2$, then
$(z_1,w_1,v,w_2,z_2)$ is a $4$-path in $D$. Since $D$ is $4$-quasi-transitive,
then there is an arc between $z_1$ and $z_2$ which is impossible because $N'$ is
an independent set. Hence, we can assume that $w_1=w_2$. As $z \in S_0$, then
there is $u \in B$ such that $u \to_D z_1$. It follows that $(u,z_1,w,z_2,v)$ is
a $4$-path in $D$. Since $D$ is a $4$-quasi-transitive digraph, then there is an
arc between $u$ and $v$, which is a contradiction. Therefore, $|S_0|=1$. This
ends the proof of Claim 4. 

\textbf{Claim 5}. $|B|=1$.

\noindent \textit{Proof of Claim 5.}  Suppose, for the sake of contradiction,
that there are $u_1,u_2 \in B$. Since the diameter of $D$ is $2$, then there is
$y_i \in V(D)$ such that $(v,y_i,u_i)$ is a path in $D$, for each $i \in
\{1,2\}$. By definitions of $S_0$ and $B$, then $y_i \notin (S_0 \cup B)$. By
Claim 3, we have that $y_i \leftrightarrow_D v$, for each $i \in \{1,2\}$. It
follows that $(u_2,z,v,y_1,u_1)$ is a $4$-path in $D$, where $z$ is the unique
vertex in $S_0$. Since $D$ is a $4$-quasi-transitive digraph, then there is an
arc between $u_1$ and $u_2$. Suppose without loss of generality that $u_2 \to_D
u_1$. As the diameter of $D$ is 2, there is $w \in V(D)$ such that $(v,w,z)$ is
a path in $D$, by Claim 1, $v \leftrightarrow_D w$ and $w \leftrightarrow_D z$.
We have that $(u_2,u_1,z,y,v)$ is a $4$-path in $D$. Again, $D$ is a
$4$-quasi-transitive digraph, so there is an arc between $u_2$ and $v$,
contradicting that $\{v,u_2\}$ is an independent set. Therefore, $|B|=1$. This
ends the proof of Claim 5.

To finish the proof, let $u$ the unique vertex in $B$. We will prove that $N =
\{v,u\}$ is a kernel of $D$, reaching the contradiction we are looking for. By
Claim 1, $\{v,u\}$ is an independent set. Let $x$ be a vertex in $V(D) \setminus
N$. If $S_0 = \{x\}$, then, by definition of $S_0$, $x \to_D v$. If $x \in V(D)
\setminus (\{ v \} \cup S_0 \cup N)$, then, by Claim 3, we have that $x \to_D
v$. Hence, $N$ is an absorbent set, and in consequence, a kernel of $D$,
contradicting that $D$ is a critical kernel imperfect. Therefore, we conclude
the desired result.
\end{proof}

Observe that every directed $n$-antihole, with $n \ge 4$, is a critical kernel
imperfect digraph with a 3-cycle with exactly one symmetrical arc. Therefore,
the hypothesis of \Cref{pro:cki4qt2symarc} is sharp.

\begin{lemma}
\label{lem:4qtdiam3local}
Let $D$ be an asymmetrical $4$-quasi-transitive digraph of diameter 3. If $D$ is
strong and $\overrightarrow{A}_3$-free, then $D$ is locally semicomplete.
\end{lemma}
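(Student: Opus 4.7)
The plan is a short proof by contradiction. Assume $D$ is not locally semicomplete, so some vertex $v$ has either $N^+(v)$ or $N^-(v)$ failing to induce a semicomplete subdigraph. Since reversing all arcs preserves asymmetry, strong connectivity, diameter, $4$-quasi-transitivity, and $\overrightarrow{A}_3$-freeness, it suffices to treat the out-neighborhood case. Fix $u, w \in N^+(v)$ with $u$ and $w$ non-adjacent in $D$. I would also record, as a standing observation, that $D$ contains no $3$-cycle at all: $\overrightarrow{A}_3 = \overrightarrow{C}_3$, and any $3$-cycle subdigraph in an asymmetric digraph is necessarily induced, so $\overrightarrow{A}_3$-freeness rules out every $3$-cycle.

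First I would pin down $d(u, v)$. Since $D$ is asymmetric and $v \to u$, we have $u \not\to v$, so $d(u, v) \ge 2$; the diameter hypothesis forces $d(u, v) \in \{2, 3\}$. If $d(u, v) = 2$, any shortest path $u \to a \to v$ together with $v \to u$ would form a $3$-cycle, a contradiction. Hence $d(u, v) = 3$, witnessed by a shortest path $u \to a_1 \to a_2 \to v$.

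Next I would produce a $4$-path from $u$ to $w$ by appending the arc $v \to w$, namely $u \to a_1 \to a_2 \to v \to w$. The only bookkeeping is to check the five vertices are pairwise distinct. By the shortest-path property, $u, a_1, a_2, v$ are distinct; $w \ne u, v$ by choice; $w = a_1$ would give $u \to w$, contradicting the non-adjacency of $u$ and $w$; and $w = a_2$ would give $w \to v$, which together with $v \to w$ contradicts asymmetry. Hence we have a genuine $4$-path from $u$ to $w$, and $4$-quasi-transitivity forces an arc between them, the desired contradiction. The symmetric case, in which $N^-(v)$ fails to be semicomplete, reduces via arc-reversal to the case just handled.

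The only delicate point is the distinctness check for the five vertices of the candidate $4$-path, and that is routine; everything else is a short interplay among asymmetry, the $\overrightarrow{A}_3$-free hypothesis, the diameter bound, and the $4$-quasi-transitive property, so I do not anticipate a serious obstacle.
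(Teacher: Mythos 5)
Your proposal is correct and follows essentially the same route as the paper's proof: establish $d(u,v)=3$ for every $u\in N^{+}(v)$ using asymmetry and $\overrightarrow{A}_3$-freeness, then extend a shortest $u$--$v$ path by the arc $v\to w$ to obtain a $4$-path and invoke $4$-quasi-transitivity. Your explicit distinctness check for the five vertices and the arc-reversal reduction for the in-neighbourhood case are minor tightenings of details the paper leaves implicit.
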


\begin{proof}
  Let $v \in V(D)$. We will prove that $D$ is locally in-semicomplete and
  locally out-semicomplete. Since $D$ is strong, then $N^+(v) \ne \varnothing$
  and $N^-(v) \ne \varnothing$.

  Let $u \in N^+(v)$. By hypothesis, $1 \le d(u,v)\le 3$. Since $D$ is an
  asymmetrical digraph, then $1<d(u,v)$, moreover, $d(u,v) \ne 2$, otherwise $D$
  has an induced copy of $\overrightarrow{A}_3$ as a subdigraph. Hence, $d(u,v)
  = 3$ for all $u \in N^+(v)$. Let $u_1,u_2 \in N^+(v)$. From above, there is a
  path $T$, from $u_1$ to $v$ with length 3. It follows that $T\cup (v,u_1)$ is
  a $4$-path from $u_1$ to $u_2$. Since $D$ is $4$-quasi-transitive, then there
  is an arc between $u_1$ and $u_2$. Hence $D[N^+(v)]$ is a semicomplete
  digraph. Moreover, since $D$ is asymmetrical and has no $\overrightarrow{A}_3$
  as induced subdigraph, $D[N^+(v)]$ is a transitive tournament. Therefore $D$
  is locally out-semicomplete digraph.

  Now, let $w \in N^-(v)$. Following an argument analogous to the one in the
  previous paragraph, we conclude that $D[N^-(v)]$ is a transitive tournament.
  Therefore, $D$ is locally in-semicomplete digraph.

  Hence, $D$ is a locally semicomplete digraph.
\end{proof}

\begin{lemma}
\label{lem:cki4qtdiam3}
    Let $D$ be an asymmetrical $4$-quasi-transitive digraph.   If $D$ has
    diameter $3$, then it is not critical kernel imperfect.
\end{lemma}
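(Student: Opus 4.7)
The plan is to argue by contradiction: suppose there exists an asymmetrical $4$-quasi-transitive critical kernel imperfect digraph $D$ of diameter $3$. By the theorem of Berge and Duchet, $D$ is strong, so the hypotheses of \Cref{lem:4qtdiam3local} are almost met; what remains is to verify that $D$ is $\overrightarrow{A}_3$-free.

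The first step is to observe that $\overrightarrow{A}_3 = \overrightarrow{C}_3$ is critical kernel imperfect (by \Cref{thm:semicompCKI}) and has diameter $2$, hence it is not isomorphic to $D$. Therefore any induced copy of $\overrightarrow{A}_3$ inside $D$ would be a proper critical kernel imperfect subdigraph of $D$, contradicting the minimality of $D$. Once $D$ is shown to be $\overrightarrow{A}_3$-free, \Cref{lem:4qtdiam3local} yields that $D$ is locally semicomplete.

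Next, I would invoke \Cref{thm:ckilocalsemi} to conclude that $D$ is isomorphic to a directed odd cycle, a directed antihole, or $\overrightarrow{C}_7(1,2)$, and it remains to rule out each candidate. A directed odd cycle $\overrightarrow{C}_n$ has diameter $n-1$, which is even whenever $n$ is odd, so no directed odd cycle has diameter $3$. Every directed antihole has diameter $2$, so no antihole has diameter $3$ either. Finally, $\overrightarrow{C}_7(1,2)$ does have diameter $3$, but it is not $4$-quasi-transitive: the sequence $(0,1,2,3,4)$ is a $4$-path in $\overrightarrow{C}_7(1,2)$, yet neither $(0,4)$ nor $(4,0)$ lies in the arc set, since neither $4$ nor $-4 \equiv 3 \pmod{7}$ belongs to $\{1,2\}$.

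Since each of the three candidates delivered by \Cref{thm:ckilocalsemi} is incompatible with the assumed hypotheses, we reach a contradiction, completing the proof. The argument presents no serious obstacle once the preliminary structural results are in place; the only subtle point is that $\overrightarrow{C}_7(1,2)$ must be excluded on the grounds of failing $4$-quasi-transitivity, rather than by a diameter argument.
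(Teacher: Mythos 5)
Your proposal is correct and follows essentially the same route as the paper: establish strongness and $\overrightarrow{A}_3$-freeness, apply \Cref{lem:4qtdiam3local} to get local semicompleteness, invoke \Cref{thm:ckilocalsemi}, and eliminate each candidate. The only (harmless) difference is in how the candidates are excluded — you use the parity of the diameter for odd cycles and the diameter of antiholes, whereas the paper also appeals to symmetrical arcs for $\overrightarrow{A}_n$ with $n \ge 4$; both arguments are valid.
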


\begin{proof}
  If there is a $4$-quasi-transitive critical kernel imperfect of diameter $3$
  $D$, then, $D$ is strong and has no $\overrightarrow{A}_3$ as induced
  subdigraph. Hence, by \Cref{lem:4qtdiam3local}, $D$ is a locally semicomplete
  digraph. From Theorem \ref{thm:ckilocalsemi}, it follows that $D$ is either a
  directed odd cycle, a directed antihole, or $D \cong \overrightarrow{C}_7(1,
  2)$. But $\overrightarrow{A}_3$ and $\overrightarrow{C}_5$ have diameter
  different from $3$; directed odd cycles with at least $7$ vertices and
  $\overrightarrow{C}_7(1, 2)$ are not $4$-quasi-transitive digraphs, and
  $\overrightarrow{A}_n$ with $n \ge 4$ has symmetrical arcs. Therefore, there
  is no asymmetrical critical kernel imperfect $4$-quasi-transitive of diameter
  $3$.
\end{proof}

The following lemmas describe some structural properties of asymmetrical $\{
\overrightarrow{C}_3, \overrightarrow{C}_5 \}$-free $4$-quasi-transitive
digraphs of diameter $4$ or $5$.

\begin{lemma}
\label{lem:4qt-dia4o5}
  Let $D$ be an asymmetrical $4$-quasi-transitive digraph of diameter $4$ or
  $5$. Let $v$ be a vertex of $D$ and $S_i$ be the set $\{ x \in V(D) \colon\
  d(x,v) = i\}$ with $i \in \{ 1, \dots, \textnormal{diam}(D)\}$. If $D$ is $\{
  \overrightarrow{C}_3, \overrightarrow{C}_5 \}$-free, then the following
  properties hold.
  \begin{enumerate}
    \item $S_1=N^-(v)$.
    \item For every $i \in \{2,\dots,\textnormal{diam}(D)\}$, $S_i \to_{D^c}
        v$.
    \item For every $i \in \{1,\dots,\textnormal{diam}(D)-2\}$ and $j \in \{
        i+2, \dots, \textnormal{diam}(D)\}$,  $S_j \to_{D^c} S_i$.
    \item $S_2 \leftrightarrow_{D^c}v$.
    \item $S_5 \leftrightarrow_{D^c}v$.
    \item $N^+(v)\subseteq S_3\cup S_4$ and $v \to_D S_4$.
  \end{enumerate}
\end{lemma}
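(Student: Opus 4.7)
The plan is to verify the six items in order, mostly by chasing definitions, the directed triangle inequality, and $4$-quasi-transitivity, with $\{\overrightarrow{C}_3,\overrightarrow{C}_5\}$-freeness used to rule out short cycles through $v$. Items 1 and 2 are essentially immediate from the definition of $S_i$: $d(x,v)=1$ is equivalent to $(x,v)\in A(D)$, so $S_1=N^-(v)$, and for $i\ge 2$ no vertex of $S_i$ can have an arc to $v$. Item 3 will follow from the triangle inequality for directed distance: if $(x,y)\in A(D)$ with $y\in S_i$, then $d(x,v)\le i+1$, contradicting $x\in S_j$ whenever $j\ge i+2$.

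For item 4, the non-adjacency $S_2\to_{D^c} v$ is already item 2. For the reverse direction, I would assume $(v,x)\in A(D)$ for some $x\in S_2$ and choose a path $(x,y,v)$ of length $2$; then the closed walk $(v,x,y,v)$ together with the asymmetry of $D$ forces $D[\{v,x,y\}]\cong\overrightarrow{C}_3$, contradicting the $\overrightarrow{C}_3$-freeness hypothesis.

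The main obstacle will be item 5, since it needs a genuine use of $4$-quasi-transitivity rather than a local triangle argument. The forward direction again comes from item 2. For the reverse I would assume $(v,x)\in A(D)$ with $x\in S_5$ (so $\textnormal{diam}(D)=5$) and pick a shortest path $(x,x_1,x_2,x_3,x_4,v)$ of length $5$; then $(v,x,x_1,x_2,x_3)$ is a $4$-path in $D$, so $4$-quasi-transitivity yields an arc between $v$ and $x_3$. A quick check shows $d(x_3,v)=2$ (any strictly smaller value would shorten the path from $x$ to $v$), so $x_3\in S_2$, contradicting item 4.

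Finally, item 6 will assemble the previous pieces. If $u\in N^+(v)$, asymmetry rules out $d(u,v)=1$, items 4 and 5 rule out $d(u,v)\in\{2,5\}$, and the diameter bound rules out $d(u,v)\ge 6$; hence $u\in S_3\cup S_4$. For the assertion $v\to_D S_4$, I would take $x\in S_4$ and a shortest path $(x,x_1,x_2,x_3,v)$, regard it as a $4$-path from $x$ to $v$, and invoke $4$-quasi-transitivity to obtain an arc between $x$ and $v$; since $d(x,v)=4\ne 1$ excludes $(x,v)\in A(D)$, the remaining arc $(v,x)\in A(D)$ is forced.
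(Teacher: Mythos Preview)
Your proof is correct and follows essentially the same approach as the paper. Items 1--3 are dispatched by definitions and the directed triangle inequality exactly as in the paper; your arguments for items 4 and 5 (building a $3$-cycle through $v$, respectively a $4$-path landing in $S_2$ and invoking item 4) coincide with the paper's, up to a relabelling of the intermediate vertices on the shortest path; and for the second half of item 6 you spell out explicitly the use of $4$-quasi-transitivity that the paper leaves implicit when it says the claim follows from ``the definition of $S_4$, part 2 and the asymmetry of $D$.''
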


\begin{proof}
  Let $D$, $v$ and $S_i$ as in the hypotheses. The first three properties follow
  from the definition of $S_i$.
  \begin{enumerate}
    \item[4.] From 2 we have that $S_2 \to_{D^c} v$. To prove that $v
      \to_{D^c}S_2$, we proceed by contradiction. Suppose that there is $x_2 \in
      S_2$ such that $v \to_{D}x_2$. By definition of $S_2$, there is $x_1 \in
      S_1$ such that $(x_2,x_1,v)$ is a path in $D$. It follows that
      $(v,x_2,x_1,v)$ is an induced $\overrightarrow{C}_3$ in $D$, which is a
      contradiction. Therefore, $v \to_{D^c}S_2$.

    \item[5.] From 2 we have that $S_5 \to_{D^c} v$. To prove that $v
      \to_{D^c}S_5$, we proceed by contradiction. Suppose that there is $x_5 \in
      S_5$ such that $v \to_{D}x_5$. By definition of $S_5$, there is $x_i \in
      S_i$, with $i \in \{1,2,3,4\}$, such that $(x_5,x_4,x_3,x_2,x_1,v)$ is a
      path in $D$. It follows that $(v,x_5,x_4,x_3,x_2)$ is a $4$-path in $D$.
      Since $D$ is $4$-quasi-transitive, then there is an arc between $v$ and
      $x_2$, contradicting $4$. Therefore, $v \to_{D^c}S_5$.

    \item[6.] Since $D$ is asymmetrical with diameter 4 or 5, and from parts 1,
      4 and 5, we have that $N^+(v)\subseteq S_3\cup S_4$. On the other hand, $v
      \to_D S_4$ follows from the definition of $S_4$, part 2 and the asymmetry
      of $D$.
  \end{enumerate}
\end{proof}

\begin{lemma}
\label{lem:4qt-dia4o5-2}
  Let $D$ be an asymmetrical $4$-quasi-transitive digraph of diameter $4$ or
  $5$. Let $v$ be a vertex of $D$ and $S_i$ be the set $\{ x \in V(D) \colon\
  d(x,v) = i \}$ with $i \in \{ 1, \dots, \textnormal{diam}(D)\}$. If $D$ is
  $\overrightarrow{C}_3$-free and $\overrightarrow{C}_5$-free, then the
  following properties hold.
  \begin{enumerate}
    \item $S_4$ is an independent set.
    \item $S_3\cap N^+(v)$, induces a transitive tournament.
    \item $S_3 \setminus N^+(v)$ is an independent set.
    \item $(S_3\cap N^+(v)) \to_D (S_3 \setminus N^+(v))$.
    \item For every $x_4 \in S_4$ and every $x_3 \in S_3$, there is an arc
      between $x_4$ and $x_3$.
    \item If $x_4 \in S_4$ and $x_3,x'_3 \in S_3$ such that $x_4 \to_D x_3$ and
      $x_3 \to_D x'_3$, then $x_4 \to_D x_3$.
    \item $S_3$ is an independent set. Moreover, if $S_3\cap N^+(v) \ne
      \varnothing$, then $|S_3|=1$ and $S_4 \to S_3$.
  \end{enumerate}
\end{lemma}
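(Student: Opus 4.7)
The plan is to prove the seven claims in order, exploiting the distance structure established in \Cref{lem:4qt-dia4o5} (notably $v \to_D S_4$ and $S_2 \leftrightarrow_{D^c} v$) and repeatedly applying $4$-quasi-transitivity to $4$-paths built from a shortest $3$-path to $v$ together with a single extra arc. For property~1, assuming for contradiction $x_4 \to_D x_4'$ in $S_4$ and picking a shortest path $(x_4', y_3', y_2', y_1', v)$, the $4$-path $(v, x_4, x_4', y_3', y_2')$ (which exists by $v \to_D S_4$) forces an arc between $v$ and $y_2' \in S_2$, which is impossible. Properties~2--5 follow the same pattern: the $4$-path $(x_3, y_2, y_1, v, x_3')$ produces adjacency inside $S_3 \cap N^+(v)$, which combined with asymmetry and $\overrightarrow{C}_3$-freeness yields a transitive tournament (property~2); any illegal arc would---via a $4$-path of the form $(x_3, x_3', y_2', y_1', v)$ or $(x_3', x_3, y_2, y_1, v)$---force an arc between a vertex of $S_3 \setminus N^+(v)$ and $v$, which is impossible (properties~3 and~4); and for property~5 the $4$-path $(x_3, y_2, y_1, v, x_4)$ forces the required arc. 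Property~6 needs only the observation that $x_3' \to_D x_4$ would close the directed $3$-cycle on $\{x_3, x_3', x_4\}$, and then the adjacency granted by property~5 gives $x_4 \to_D x_3'$.

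For property~7, combining properties~2, 3, and~4 shows that the only potential arcs inside $S_3$ either lie inside $S_3 \cap N^+(v)$ (when $|S_3 \cap N^+(v)| \ge 2$) or point from $S_3 \cap N^+(v)$ to $S_3 \setminus N^+(v)$. Thus independence of $S_3$ is equivalent to $|S_3 \cap N^+(v)| \le 1$ together with the implication $S_3 \cap N^+(v) \ne \varnothing \Rightarrow S_3 \setminus N^+(v) = \varnothing$. The closing $S_4 \to_D S_3$ clause is then immediate: when $|S_3| = 1$, the third vertex of any shortest path from $x_4 \in S_4$ to $v$ lies in $S_3$ and must therefore equal the unique element $x_3$, giving $x_4 \to_D x_3$, while asymmetry precludes $x_3 \to_D x_4$.

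The main obstacle is to rule out both of the configurations ``$|S_3 \cap N^+(v)| \ge 2$'' and ``$S_3 \cap N^+(v)$ and $S_3 \setminus N^+(v)$ are simultaneously nonempty''. In either case one obtains, by property~2 or~4, an arc $x_3 \to_D x_3'$ with $x_3, x_3' \in S_3$, and I would analyse the would-be $5$-cycle $(v, x_3, x_3', y_2', y_1', v)$ coming from a shortest path of $x_3'$. Several chord options are automatically forbidden: $y_2' \to_D x_3$ produces the $4$-path $(y_2', x_3, y_2, y_1, v)$ and hence an impossible arc between $y_2' \in S_2$ and $v$, while $y_1' \to_D x_3'$ creates the directed $3$-cycle $y_1' \to x_3' \to y_2' \to y_1'$. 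This leaves only the candidate chords $x_3 \to_D y_2'$ and $y_1' \to_D x_3$; iteratively applying $4$-quasi-transitivity to auxiliary $4$-paths such as $(y_1, v, x_3, x_3', y_2')$, $(y_2, y_1, v, x_3, y_2')$, and $(y_2, y_1, v, x_3', y_2')$ propagates arcs among $\{y_1, y_2, y_1', y_2'\}$---for example, forcing $y_2 \to_D y_2'$ to avoid an impossible arc between $x_3'$ and $v$---until every consistent branch exhibits a forbidden $\overrightarrow{C}_3$ (typically on $\{x_3, y_2', y_1'\}$ or $\{y_1', x_3', y_2'\}$). This delicate chord-propagation case analysis is the principal technical hurdle of the lemma.
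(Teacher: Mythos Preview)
Your treatment of properties 1--6 is correct and essentially identical to the paper's: the same $4$-paths are used, and the same contradictions with \Cref{lem:4qt-dia4o5} are reached.

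For property~7 you miss the decisive simplification. The paper does not touch the $5$-cycle $(v, x_3, x_3', y_2', y_1', v)$ at all; it routes through $S_4$ instead. Let $v_1$ be the source of the transitive tournament $S_3 \cap N^+(v)$. Property~5 makes $v_1$ adjacent to every $x_4 \in S_4$, and if $v_1 \to_D z$ for some $z \in S_4$, the $4$-path $(v, v_1, z, x_3, x_2)$ built from a shortest $(z,v)$-path forces an arc between $v$ and $x_2 \in S_2$; hence $S_4 \to_D v_1$. Now for any $u \in S_3 \setminus \{v_1\}$, properties~2 and~4 give $v_1 \to_D u$, and the $4$-path $(v, x_4, v_1, u, x_2)$, with any $x_4 \in S_4$ and any $x_2 \in S_2 \cap N^+(u)$, again forces an arc between $v$ and $S_2$. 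Two $4$-paths through $S_4$ settle both $|S_3| = 1$ and $S_4 \to_D S_3$ at once, and the argument uses neither $\overrightarrow{C}_5$-freeness nor a chord chase.

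Your chord analysis, besides being longer, has a concrete gap in the configuration $|S_3 \cap N^+(v)| \ge 2$: there $v \to_D x_3'$ is already an arc, so the $5$-cycle is never induced and $\overrightarrow{C}_5$-freeness yields nothing. Your claim that one is ``left only'' with the candidate chords $x_3 \to_D y_2'$ and $y_1' \to_D x_3$ is therefore false in that case (indeed, one of your own auxiliary paths, $(y_2, y_1, v, x_3', y_2')$, presupposes the arc $v \to_D x_3'$). The ensuing ``iterative propagation'' is too vague to assess, and when one tries to push it through it branches heavily without a visible contradiction. The detour through $S_4$ is the idea you are missing.
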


\begin{proof}
  Let $D$, $v$ and $S_i$ be as in the hypotheses. 
  \begin{enumerate}
    \item Proceeding by contradiction, suppose that $S_4$ is not an independent
      set. Thus, there are vertices $x_4$ and $x'_4$ in $S_4$ such that $x_4
      \to_{D}x'_4$. By definition of $S_4$, there exists $x_i \in S_i$, with $i
      \in \{1, 2, 3\}$, such that $(x'_4, x_3, x_2, x_1, v)$ is a path in $D$.
      Moreover, by part $6$ of Lemma \ref{lem:4qt-dia4o5}, we have $v \to_D
      x_4$. It follows that $(v,x_4,x'_4,x_3,x_2)$ is a $4$-path in $D$. Since
      $D$ is $4$-quasi-transitive, then  there is an arc between $v$ and $x_2$,
      which is a contradiction. Hence $S_4$ is independent.

    \item First, we will prove that there is an arc between any two vertices in
      $S_3 \cap N^+(v)$. Let $u,w \in S_3\cap N^+(v)$. By definition of $S_3$,
      there is $x_i \in S_i$, with $i \in \{1,2\}$, such that $(u,x_2,x_1,v)$ is
      a path in $D$. It follows that $(u,x_2,x_1,v,w)$ is a $4$-path in $D$, and
      hence, there is an arc between $u$ and $w$. Therefore, $S_3\cap N^+(v)$
      induces a semicomplete digraph in $D$, even more, since $D$ is
      asymmetrical an $\overrightarrow{C}_3$-free, we can conclude that $S_3\cap
      N^+(v)$ induces a transitive tournament.

    \item Proceeding by contradiction suppose that there are vertices $u$ and
      $w$ in $S_3 \setminus N^+(v)$ such that $w \to_D u$. By definition of
      $S_3$, there is $x_i \in S_i$, with $i \in \{1,2\}$, such that
      $(u,x_2,x_1,v)$ is a path in $D$. It follows that $(w,u,x_2,x_1,v)$ is a
      $4$-path in $D$. Since $D$ is $4$-quasi-transitive, then there is an arc
      between $w$ and $v$, contradicting part $1$ of Lemma \ref{lem:4qt-dia4o5}
      or the choice of $w$. Therefore $S_3 \setminus N^+(v)$ is independent.

    \item Let $u \in S_3 \setminus N^+(v)$ and $w \in S_3\cap N^+(v)$. First we
      will prove that there is an arc between $u$ and $w$. By definition of
      $S_3$, there is $x_i \in S_i$, with $i \in \{1,2\}$, such that
      $(u,x_2,x_1,v)$ is a path in $D$. It follows that $(u,x_2,x_1,v,w)$ is a
      $4$-path in $D$. Thus, there is an arc between $u$ and $w$ in $D$. If $u
      \to_D w$, then $(u,w,x'_2,x'_1,v)$ is a $4$-path in $D$, for some $x'_i
      \in S_i$ with $i \in \{ 1,2\}$. Since $D$ is a $4$-quasi-transitive
      digraph, we have the existence of an arc between $u$ and $v$ which is
      impossible. Therefore, $w \to_D u$.

    \item Let $x_4 \in S_4$ and $x_3 \in S_3$. On the one hand, by definition of
      $S_3$, there is $x_i \in S_i$, with $i \in \{1,2\}$, such that $(x_3, x_2,
      x_1, v)$ is a path in $D$. On the other hand, by part $6$ of Lemma
      \ref{lem:4qt-dia4o5}, $v \to_D x_4$. It implies that $(x_3, x_2, x_1, v,
      x_4)$ is a $4$-path in $D$, thus there is an arc between $x_4$ and $x_3$.

    \item Let $x_4 \in S_4$ and $x_3,x'_3 \in S_3$ be such that $x_4 \to_D x_3$
      and $x_3 \to_D x'_3$. By part $5$ of this lemma, there is an arc between
      $x'_3$ and $x_4$. If $x'_3 \to_D x_4$, then $(x_4,x_3,x'_3,x_4)$ is an
      induced $\overrightarrow{C}_3$ in $D$, which is impossible. Therefore $x_4
      \to_D x'_3$.

    \item Observe that if $S_3 \cap N^+(v) = \varnothing$, then the result
      follows from part $3$ of this Lemma. Suppose that $S_3\cap N^+(v) \ne
      \varnothing$. Let $\{ v_1,\dots,v_n\}$ the acyclic ordering of the
      vertices of $S_3\cap N^+(v)$. By part $5$, there is an arc for between
      $v_1$ and $x_4$, for every vertex $x_4$ in $S_4$. If there is $z \in S_4$
      such that $v_1 \to_D z$, then, by definition of $S_4$, there is $x_i \in
      S_i$, with $i \in \{1,2,3\}$, such that $(x_4,x_3,x_2,x_1,v)$ is a path in
      $D$. Moreover, since $D$ is asymmetrical, we have that $(v,v_1,z,x_3,x_2)$
      is a $4$-path in $D$. It follows that there is an arc between $v$ and
      $x_2$, which is impossible by part $4$ of Lemma \ref{lem:4qt-dia4o5}.
      Hence, $S_4 \to_D v_1$. We will prove that $S_3=\{ v_1\}$. Proceeding by
      contradiction, suppose that there is $u \in S_3 \setminus \{ v_1\}$. By
      parts $2$ and $4$, we have that $u \in N^+(v_1)$, and by definition of
      $S_3$, there is $x_2 \in S_2$ such that $u \to_D x_2$. It follows that
      $(v,x_4,v_1,u,x_2)$ is a $4$-path in $D$, thus there is an arc between $v$
      and $x_2$, contradicting part $4$ of Lemma \ref{lem:4qt-dia4o5}.
    \end{enumerate}
\end{proof}

\begin{lemma}
\label{lem:cki4qtdiam4}
  The unique asymmetrical $4$-quasi-transitive critical kernel imperfect of
  diameter 4, is $\overrightarrow{C}_5$. Moreover, there is no asymmetrical
  $4$-quasi-transitive critical kernel imperfect digraphs of diameter 5.
\end{lemma}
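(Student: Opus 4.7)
The plan is to argue by contradiction in both halves of the statement, exploiting the structural Lemmas \ref{lem:4qt-dia4o5} and \ref{lem:4qt-dia4o5-2} together with the CKI hypothesis. Assume $D$ is an asymmetrical $4$-quasi-transitive CKI digraph of diameter $4$ with $D \not\cong \overrightarrow{C}_5$, or of diameter $5$. Since a CKI digraph cannot properly contain a CKI subdigraph, Theorem \ref{thm:ckiless7} together with Lemma \ref{lem:4qtdiam2} forces $D$ to be $\{\overrightarrow{C}_3, \overrightarrow{C}_5\}$-free, so the entire toolkit of Lemmas \ref{lem:4qt-dia4o5} and \ref{lem:4qt-dia4o5-2} is available. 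Fix a vertex $v$ of eccentricity equal to $\mathrm{diam}(D)$, so $S_{\mathrm{diam}(D)} \neq \varnothing$, and partition $V(D)\setminus\{v\}$ into distance layers $S_1,\dots,S_{\mathrm{diam}(D)}$.

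For the diameter-$4$ case, I would split on Lemma \ref{lem:4qt-dia4o5-2}(7): either $S_3 \cap N^+(v) = \varnothing$ or $|S_3|=1$ with $S_4 \to S_3$. In the first subcase, Lemma \ref{lem:4qt-dia4o5-2}(3) makes $S_3$ independent, and since $S_1=N^-(v)$ while $v \not\to S_3$, the set $\{v\}\cup S_3$ is independent; $v$ absorbs $S_1$, and the shortest-path definition yields an arc from each $x_4 \in S_4$ into $S_3$, so the work reduces to showing every $x_2 \in S_2$ has an out-arc into $S_3$. I plan to do this by taking a shortest $v$-path $(x_4,y_3,y_2,y_1,v)$ through a chosen $x_4 \in S_4$ and chaining $4$-quasi-transitivity on $4$-paths of the form $(x_4,y_3,x_2,x_1,v)$ or $(y_3,x_2,x_1,v,w)$ to force the missing arc, using asymmetry and the forbidden arcs $S_2 \leftrightarrow_{D^c} v$ of Lemma \ref{lem:4qt-dia4o5}(4). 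In the second subcase, with $S_3=\{u\}$ and $S_4 \to u$, I would consider a kernel $N'$ of the CKI-induced proper subdigraph $D - u$: the analysis follows the template of Proposition \ref{pro:cki4qt2symarc}, tracking into which $S_i$ the vertices of $N'$ fall and applying $4$-quasi-transitivity on paths through $u$ and $v$ either to extend $N'$ to a kernel of $D$ (contradiction) or to collapse $D$ down to exactly five vertices with the unique consistent arc pattern being $\overrightarrow{C}_5$, contradicting $D \not\cong \overrightarrow{C}_5$.

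For the diameter-$5$ case, pick $x \in S_5$ with shortest path $P=(x,x_4,x_3,x_2,x_1,v)$. Applying $4$-quasi-transitivity to $(x,x_4,x_3,x_2,x_1)$ yields an arc between $x$ and $x_1$; direction $x \to x_1$ would give $d(x,v)\le 2$, so $x_1 \to x$. I would then iterate: the $4$-path $(y,x_1,x,x_4,x_3)$ for any $y \in N^-(x_1) \subseteq S_2$ (together with Lemma \ref{lem:4qt-dia4o5}(3) controlling arcs between layers) should force arcs between $S_2$ and $x_3$, while the $4$-path $(x_3,x_2,x_1,x,x_4)$ and a mirror application along $(v,x_4,x_3,x_2,x_1)$ produce arcs forbidden by Lemma \ref{lem:4qt-dia4o5}(4)--(5) (namely between $v$ and $S_2$, or between $v$ and $S_5$). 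This should give the contradiction ruling out diameter $5$ altogether.

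The main obstacle is the absorbency step in the diameter-$4$, $S_3 \cap N^+(v)=\varnothing$ subcase: proving that every $x_2 \in S_2$ points into $S_3$ is not immediate from the distance layering alone, and is likely to branch on whether the relevant auxiliary $4$-paths use $S_4$ or repeat within $S_2$. A fallback if the direct argument does not close is to work with a kernel of $D-v$, use that it must meet $S_1$ and avoid $N^+(v)$, and refine until it either extends to $D$ (contradicting CKI) or pins the vertex count at $5$, forcing $D \cong \overrightarrow{C}_5$.
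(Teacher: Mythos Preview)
Your setup and case split match the paper's, but the kernel candidate in your first subcase (diameter $4$, $S_3\cap N^+(v)=\varnothing$) cannot work. The absorbency step you flag as the ``main obstacle'' is not merely delicate---it is provably false. Under exactly the hypotheses you are using, one has $S_2 \to_{D^c} S_3$: if $x_2 \to_D x_3$ for some $x_2\in S_2$, $x_3\in S_3$, then picking $x'_2\in S_2$ with $x_3\to_D x'_2$ (from the definition of $S_3$) and $x_1\in S_1$ with $x'_2\to_D x_1$ gives the $4$-path $(x_2,x_3,x'_2,x_1,v)$, so $4$-quasi-transitivity forces an arc between $x_2$ and $v$, contradicting Lemma~\ref{lem:4qt-dia4o5}.4. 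Combined with $S_2\leftrightarrow_{D^c} v$, this means no vertex of $S_2$ is absorbed by $\{v\}\cup S_3$, and your proposed $4$-paths $(x_4,y_3,x_2,x_1,v)$ or $(y_3,x_2,x_1,v,w)$ cannot manufacture the needed arc (the first already presupposes $y_3\to x_2$ for a generic $x_2$, which you do not have). Your fallback of analyzing a kernel of $D-v$ is not developed enough to assess.

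The paper handles both diameters together inside the same two cases. In the subcase $S_3\cap N^+(v)=\varnothing$ it takes the opposite tack: it records $S_2\to_{D^c}S_3$ and $N^+(v)=S_4$, shows each $x_3\in S_3$ satisfies $x_3\to_D S_4$ or $S_4\to_D x_3$, lets $N_2$ be a kernel of $D[S_2]$, and proves $N=S_4\cup N_2$ is a kernel of $D$; the nontrivial absorbency is for $S_1$, obtained by a $\overrightarrow{C}_5$-freeness argument on a $5$-cycle $(x_2,x_1,v,x_4,w,x_2)$ with $w\in S_3$ and $S_4\to_D w$. In the subcase $|S_3|=1$, say $S_3=\{w\}$, rather than deleting $w$ the paper works inside $S_1$: it proves $w\to_D S_2$, that every $x_1\in S_1$ satisfies $x_1\to_D S_2$ or $S_2\to_D x_1$, that the latter set $C$ is a singleton $\{c\}$, and exhibits $\{w,c\}$ (augmented by a kernel of $D[S_5]$ when $\textnormal{diam}(D)=5$) as a kernel. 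Your separate diameter-$5$ argument is therefore unnecessary, and in any event the containment $N^-(x_1)\subseteq S_2$ you invoke need not hold.
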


\begin{proof}
    Proceeding by contradiction, suppose that there is $D$ a
    $4$-quasi-transitive asymmetrical critical kernel imperfect of diameter 4 or
    5, different from $\overrightarrow{C}_5$. We know that $D$ is strong, as
    well as $\overrightarrow{C}_3$-free and $\overrightarrow{C}_5$-free. 

    Let $v$ be a vertex of $D$ such that the set $S_i=\{ x \in V(D) \colon\
    d(x,v)=i\}$ is not empty for each $i \in \{ 1, \dots,
    \textnormal{diam}(D)\}$. Note that the hypotheses of Lemmas
    \ref{lem:4qt-dia4o5} and \ref{lem:4qt-dia4o5-2} hold. We will consider two
    cases.

    \textit{Case 1.} Suppose first that $S_3\cap N^+(v) \ne \varnothing$. By
    Lemma \ref{lem:4qt-dia4o5-2}.7, $|S_3|=1$, so suppose that $S_3=\{w\}$. Note
    that $v \to_D w$. First, we will prove that $w \to S_2$. Let $x_2 \in S_2$.
    By definition of $S_2$ there is $x_1 \in S_1$, such that $(x_2,x_1,v)$ is a
    path in $D$. By Lemmas \ref{lem:4qt-dia4o5}.6 and \ref{lem:4qt-dia4o5-2}.7,
    we have that $v \to_D S_4$ and $S_4 \to w$, and therefore it follows that
    $(x_2,x_1,v, x_4,w)$ is a $4$-path in $D$, with $x_4 \in S_4$. Thus there is
    an arc between $x_2$ and $w$. Note that, if $x_2 \to_D w$, then
    $(x_2,w,x'_2,x'_1,v)$ is a $4$-path in $D$, for some $x'_i \in S_i$ with $i
    \in \{ 1,2\}$. Hence, there is an arc between $x_2$ and $v$, which is
    impossible by Lemma \ref{lem:4qt-dia4o5}.4. Therefore, $w \to S_2$.

    Now, we will prove that, for every $x_1 \in S_1$ and every $x_2 \in S_2$,
    there is an arc between $x_1$ and $x_2$. Let $x_1 \in S_1$ and $x_2 \in
    S_2$. Since $w \to_D S_2$, using Lemma \ref{lem:4qt-dia4o5}.7 we have that
    for some $x_4 \in S_4$, the $4$-path $(x_1,v,x_4,w,x_2)$ exists in $D$ and
    hence, there is an arc between $x_1$ and $x_2$. Furthermore, we will show
    that for every $x_1 \in S_1$, either $x_1 \to_D S_2$ or $S_2 \to_D x_1$.
    Suppose that this is not true, so there are $x_1 \in S_1$ and $x_2,x'_2 \in
    S_2$ such that $x_1 \to_D x_2$ and $x'_2 \to_D x_1$. Since $D$ is
    asymmetrical, by definition of $S_2$ it follows that $(x'_2,x_1,x_2,x'_1,v)$
    is a $4$-path in $D$, for some $x'_1 \in S_1$. Hence, there is an arc
    between $x_2$ and $v$, contradicting Lemma \ref{lem:4qt-dia4o5}.4.

    Consider the sets $B = \{ x \in S_1 \colon\ x \to_D S_2\}$ and $C=\{ x \in
    S_1 \colon\ S_2 \to_D x\}$. Notice that $C \ne \varnothing$, $B \to_D S_2$
    and $S_2 \to_D C$. We now show that, $B \to C$. Let $b \in B$ and $c \in C$.
    By definition of $B$ and $C$, we have that $(b,v,w,x_2,c)$ is a $4$-path
    from $b$ to $c$, for some $x_2 \in S_2$. This implies that there is an arc
    between $b$ and $c$. If $c \to_D b$ then, for some $x_2 \in S_2$, we have
    that $(b,x_2,c,b)$ is an induced $\overrightarrow{C}_3$ in $D$, which is
    impossible. Hence, $b \to_D c$. Now, we prove that $w \leftrightarrow_{D^c}
    C$. By Lemma \ref{lem:4qt-dia4o5}.3 we have that $w \to_{D^c}C$. To prove $C
    \to_{D^c}w$, aiming for a contradiction, we suppose that there is $c \in C$
    such that $c \to_D w$. It follows that there exists $x_2 \in S_2$ such that
    $(c,w,x_2,c)$ is is an induced $\overrightarrow{C}_3$ in $D$, contradicting
    that $D$ is $\overrightarrow{C}_3$-free. Therefore, $w \leftrightarrow_{D^c}
    C$. We affirm that $|C|=1$. Indeed, suppose for the sake of contradiction,
    that there are $c_1, c_2 \in C$. It follows that $(c_1,v,w,x_2,c_2)$ is a
    $4$-path in $D$, hence there is an arc between $c_1$ and $c_2$. Suppose
    without loss of generality that $c_1 \to c_2$, then $(c_1,c_2,v,x_4,w)$ is a
    $4$-path in $D$, for some $x_4 \in S_4$. But this implies that there is an
    arc between $c_1$ and $w$, contradicting that $w \leftrightarrow_{D^c}C$.
    Thus $|C|=1$.

    \textit{Case 1.1} If $D$ has diameter $4$, then we claim that $N=\{w,c\}$ is
    a kernel of $D$, with $c$ the unique element of $C$. We have already proved
    that $w \leftrightarrow_{D^c} c$, and hence, $N$ is independent. To prove
    that $N$ is absorbent, it suffices to observe that $S_4 \to_D w$, $v \to_D
    w$, $S_2 \to_D c$, and $B \to_D c$. Therefore, $N$ is a kernel of $D$ which
    is a contradiction.

    \textit{Case 1.2} Suppose that $D$ has diameter $5$, and let $D_5 = D[S_5]$.
    Since $D$ is critical kernel imperfect, then $D_5$ has a kernel, say $N_5$.
    We claim that $N=\{w,c\} \cup N_5$ is a kernel of $D$. To prove the
    independence, note that, by definition, $N_5$ is an independent set of $D_5$
    and, in consequence, of $D$. We have that $w \to_{D^c} c$, and, by Lemma
    \ref{lem:4qt-dia4o5}.3, $N_5 \to_{D^c}w$ and $N_5 \to_{D^c}c$. Thus we have
    to prove that $w \to_{D^c} N_5$ and $c \leftrightarrow_{D^c}N_5$. Let $x_5
    \in N_5$. If $w \to_D x_5$, then, as a consequence of Lemma
    \ref{lem:4qt-dia4o5-2}.7, for some $x_4 \in S_4$ we have that
    $(w,x_5,x_4,w)$ is an induced $\overrightarrow{C}_3$ in $D$, which is
    impossible. Hence $w \to_{D^c} x_5$. On the other hand, if $c \to_{D} x_5$,
    then $(v,w,x_2,c,x_5)$ is a $4$-path in $D$, for some $x_2 \in S_2$, which
    is not possible. Therefore, $c \to_{D^c} x_5$, and $N$ is independent. To
    prove that $N$ is absorbent, it is sufficient to note that $S_4 \to_D w$, $v
    \to_D w$, $S_2 \to_D c$, $B \to_D c$, and $N_5$ is absorbent in $D_5$.
    Therefore, $N$ is a kernel of $D$, contradicting that $D$ is critical kernel
    imperfect.

    \textit{Case 2.} Now, suppose that $S_3\cap N^+(v)=\varnothing$.   By Lemma
    \ref{lem:4qt-dia4o5-2}.7, $S_3$ is an independent set of $D$. Since $S_3\cap
    N^+(v)=\varnothing$, Lemma \ref{lem:4qt-dia4o5}.6 implies $v
    \leftrightarrow_{D^c} S_3$ and $N^+(v)=S_4$. First, observe that if there
    are $x_2 \in S_2$ and $x_3 \in S_3$ such that $x_2 \to_D x_3$, then
    $(x_2,x_3,x'_2,x_1,v)$ is a $4$-path in $D$, for some $x'_2 \in S_2$.
    Therefore, there is an arc between $x_2$ and $v$, which is impossible by
    Lemma \ref{lem:4qt-dia4o5}.4. Therefore, $S_2 \to_{D^c}S_3$.

    Now, we prove that for every $x_3 \in S_3$ we have that either $x_3 \to_{D}
    S_4$ or $S_4 \to_D x_3$. Let $x_3 \in S_3$ and, in order to reach a
    contradiction, suppose that there are $x_4,x'_4 \in S_4$ such that $x_3
    \to_D x_4$ and $x'_4 \to_D x_3$. It follows that $(v,x'_4,x_3,x_4,x'_3)$ is
    a $4$-path in $D$, for some $x'_3 \in S_3$, and thus there is an arc between
    $v$ and $x'_3$, contradicting that $v \leftrightarrow_{D^c}S_3$. Hence, for
    every $x_3 \in S_3$ we have that either $x_3 \to_{D} S_4$, or $S_4 \to_D
    x_3$. Consider the sets $U=\{ x \in S_3 \colon\ x \to_D S_4\}$ and $W=\{ x
    \in S_3 \colon\ S_4 \to_D x\}$. Notice that $W \ne \varnothing$, $U \to_D
    S_4$, and $S_4 \to_D W$.

    Let $D_2 = D[S_2]$. Since $D$ is a critical kernel imperfect, then $D_2$ has
    a kernel, say $N_2$. We claim that for every $x_3 \in S_3$ there is $n \in
    N_2$ such that $x_3 \to_D n$, and moreover, $W \to_D N_2$. Let $x_3 \in
    S_3$, it follows that there is $x_2 \in S_2$ such that $x_3 \to_D x_2$. If
    $x_2 \notin N_2$, then there is $n \in N_2$ such that $x_2 \to_D n$. It
    follows that $(x_3,x_2,n,x_1,v)$ is a $4$-path, for some $x_1 \in S_1$, and
    hence, there is an arc between $x_3$ and $v$, which is impossible. Thus,
    $x_2 \in N_2$. To prove the second claim, assume that $x_3 \in W$. Let $z
    \in N_2$, it follows that $(z,x_1,v,x_4,x_3)$ is a $4$-path in $D$, for some
    $x_1 \in S_1$ and with $x_4 \in S_4$, thus there is an arc between $z$ and
    $x_3$. Since $S_2 \to_{D^c}S_3$, then $x_3 \to_D z$. Therefore, $W \to_D
    N_2$.

    Finally, we claim that $N=S_4\cup N_2$ is a kernel of $D$. To prove that $N$
    is independent, observe that, on the one hand, $N_2$ is already an
    independent set, and on the other, Lemma \ref{lem:4qt-dia4o5-2}.1 implies
    that $S_4$ is independent.   Also, an application of Lemma
    \ref{lem:4qt-dia4o5}.3 implies $S_4 \to_{D^c}N_2$.   So, it remains to prove
    that $N_2 \to_{D^c} S_4$.   For the sake of contradiction, suppose that
    there are $x_4 \in S_4$ and $x_2 \in N_2$ such that $x_2 \to_D x_4$. This
    implies that $(x_2,x_4,x_3,x_2)$ is an induced $\overrightarrow{C}_3$ in
    $D$, for some $x_3 \in W$, contradicting that $D$ is
    $\overrightarrow{C}_3$-free. Hence $N_2 \to_{D^c} S_4$. We conclude that $N$
    is independent.

    For the absorbance of $N$, we have that $v \to_D S_4$, also $N_2$ is a
    kernel of $D_2$, and for every $x_3 \in S_3$ there is $n \in N_2$ such that
    $x_3 \to_D n$.   When $D$ has diameter 5, we also have that, for every $x_5
    \in S_5$, there is $x_4 \in S_4$ such that $x_5 \to_D x_4$.   So, it remains
    to prove that for every $x_1 \in S_1$ there is $x \in N$ such that $x_1
    \to_D x$. Let $x_1 \in S_1$ and $x \in N_2$. Since $(x_1,v,x_4,w,x)$ is a
    $4$-path in $D$, for some $x_4 \in S_4$, then there is an arc between $x_1$
    and $x$. If $x_1 \to_D x$, then we have the desired result. Suppose that $x
    \to_D x_1$, it follows that $\gamma=(x,x_1,v,x_4,w,x)$ is a 5-cycle of $D$,
    with $x_4 \in S_4$ and $w \in W$. Since $D$ is $\overrightarrow{C}_5$-free,
    then $\gamma$ has at least one diagonal. Observe that $x_4
    \leftrightarrow_{D^c}x$, $x \leftrightarrow_{D^c}v$, $v
    \leftrightarrow_{D^c}w$ and $x_4 \to_{D^c}x_1$, it follows that $x
    \to_{D}x_4$, with $x_4 \in S_4$. Hence $N$ is an absorbent set in $D$. Thus
    $N$ is a kernel of $D$, which is a contradiction.

    From above, we conclude that the unique asymmetrical $4$-quasi-transitive
    critical kernel imperfect digraph of diameter $4$ is $\overrightarrow{C}_5$,
    and there is no asymmetrical $4$-quasi-transitive critical kernel imperfect
    digraphs of diameter $5$.
\end{proof}

\begin{theorem}\label{thm:unique-4qt-cki} The unique asymmetrical
    $4$-quasi-transitive critical kernel imperfect digraphs are
    $\overrightarrow{A}_3$ and $\overrightarrow{C}_5$.
\end{theorem}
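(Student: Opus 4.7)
The plan is to prove \Cref{thm:unique-4qt-cki} as a straightforward assembly of the lemmas already established in this section, performing a case analysis on the diameter of $D$.

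First I would let $D$ be any asymmetrical $4$-quasi-transitive critical kernel imperfect digraph. Since $D$ is critical kernel imperfect, it is not a complete digraph (complete digraphs are kernel-perfect, having a kernel of size $1$), so $\textnormal{diam}(D) \ge 2$. By \Cref{cor:4qtdiam6}, a $4$-quasi-transitive critical kernel imperfect digraph cannot have diameter at least $6$, so $\textnormal{diam}(D) \in \{2,3,4,5\}$.

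Next I would dispatch each of these four possibilities with a single citation. If $\textnormal{diam}(D) = 2$, then \Cref{lem:4qtdiam2} gives $D \cong \overrightarrow{A}_3$. If $\textnormal{diam}(D) = 3$, then \Cref{lem:cki4qtdiam3} yields an immediate contradiction, ruling this case out. If $\textnormal{diam}(D) = 4$ or $\textnormal{diam}(D) = 5$, then \Cref{lem:cki4qtdiam4} tells us either $D \cong \overrightarrow{C}_5$ (the only surviving diameter-$4$ example) or a contradiction (no diameter-$5$ examples exist). Combining the two surviving cases gives $D \in \{\overrightarrow{A}_3, \overrightarrow{C}_5\}$.

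Finally I would verify the converse direction, namely that both $\overrightarrow{A}_3$ and $\overrightarrow{C}_5$ are indeed asymmetrical, $4$-quasi-transitive, and critical kernel imperfect. For $\overrightarrow{A}_3 = \overrightarrow{C}_3$, any $4$-path would have to repeat a vertex in a $3$-vertex digraph, so $4$-quasi-transitivity is vacuous; asymmetry is clear, and it has no independent absorbent set, while each $2$-vertex induced subdigraph trivially does. For $\overrightarrow{C}_5$, the same vacuous verification of $4$-quasi-transitivity applies (any $4$-path from $u$ to $v$ forces $v$ to be the unique out-neighbour at distance $4$, which equals the in-neighbour of $u$, hence adjacency holds), asymmetry is obvious, and it is a classical example of a critical kernel imperfect digraph.

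There is no real obstacle here: the deep work has been done in the four structural results quoted above. The only subtlety is to record the trivial but necessary observation that diameter-$1$ digraphs are kernel-perfect, so the case analysis truly starts at diameter $2$, and to remember to check the converse direction so that the word ``unique'' in the statement is justified.
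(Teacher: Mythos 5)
Your proposal is correct and follows exactly the same route as the paper, which proves the theorem by combining \Cref{cor:4qtdiam6} with \Cref{lem:4qtdiam2}, \Cref{lem:cki4qtdiam3}, and \Cref{lem:cki4qtdiam4} in a case analysis on the diameter. The only difference is that you spell out the (correct) verification that $\overrightarrow{A}_3$ and $\overrightarrow{C}_5$ really are asymmetrical, $4$-quasi-transitive, and critical kernel imperfect, which the paper leaves implicit.
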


\begin{proof}
    The result follows from Corollary \ref{cor:4qtdiam6} and Lemmas
    \ref{lem:4qtdiam2}, \ref{lem:cki4qtdiam3} and \ref{lem:cki4qtdiam4}.
\end{proof}

As a consequence of Theorem \ref{thm:unique-4qt-cki} we have the following
result.

\begin{corollary}
    If $D$ is an asymmetrical $4$-quasi-transitive digraph, then it is
    kernel-perfect if and only if it is $\{ \overrightarrow{A}_3,
    \overrightarrow{C_5} \}$-free.
\end{corollary}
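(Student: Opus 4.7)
The plan is to derive this corollary directly from \Cref{thm:unique-4qt-cki}, together with the standard equivalence between kernel-perfectness and the absence of induced CKI subdigraphs. The forward implication is essentially immediate: if $D$ is kernel-perfect, then it contains no CKI as an induced subdigraph, and since both $\overrightarrow{A}_3$ and $\overrightarrow{C}_5$ are CKI digraphs (the former by \Cref{thm:semicompCKI} or \Cref{lem:4qtdiam2}, the latter as a directed odd cycle), $D$ must be $\{\overrightarrow{A}_3,\overrightarrow{C}_5\}$-free.

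For the reverse implication, I would proceed by contrapositive. Suppose $D$ is an asymmetrical $4$-quasi-transitive digraph that is not kernel-perfect. Then some induced subdigraph of $D$ has no kernel; choose such a subdigraph $D'$ of minimum order. By minimality, every proper induced subdigraph of $D'$ has a kernel while $D'$ itself does not, so $D'$ is critical kernel imperfect. The key observation is that both asymmetry and $4$-quasi-transitivity are inherited by induced subdigraphs: asymmetry is an arc-local condition, and any $4$-path in $D'$ is a $4$-path in $D$, so the arc between its endpoints guaranteed by $4$-quasi-transitivity of $D$ lies between two vertices of $D'$ and is therefore present in the induced subdigraph $D'$. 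Hence $D'$ is an asymmetrical $4$-quasi-transitive CKI, so \Cref{thm:unique-4qt-cki} forces $D'\cong\overrightarrow{A}_3$ or $D'\cong\overrightarrow{C}_5$, contradicting the hypothesis that $D$ is $\{\overrightarrow{A}_3,\overrightarrow{C}_5\}$-free.

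There is no genuine obstacle here, since all the structural work has already been carried out in the proof of \Cref{thm:unique-4qt-cki}; the corollary is really just a reformulation of that theorem in terms of forbidden induced subdigraphs. The only point worth spelling out in the write-up is the inheritance of $4$-quasi-transitivity under induced subdigraphs, which is what allows the minimal non-kernel subdigraph to remain in the class and thus be pinned down by the theorem.
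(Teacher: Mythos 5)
Your argument is correct and is precisely the reasoning the paper leaves implicit when it states the corollary as an immediate consequence of Theorem~\ref{thm:unique-4qt-cki}: the equivalence between kernel-perfectness and the absence of induced CKI subdigraphs, combined with the fact that asymmetry and $4$-quasi-transitivity are hereditary under induced subdigraphs. No gaps; the explicit remark on heredity of $4$-quasi-transitivity is the only detail worth recording.
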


\section{$4$-transitive digraphs}
\label{sec:4t}

In \cite{hernandezDMGT33}, Hernández-Cruz gave a structural characterization
of strong $4$-trasitive digraphs. Note that $4$ is the largest value of $k$ such
that strong $k$-transitive digraphs are characterized. 

\begin{theorem}\cite{hernandezDMGT33}
\label{thm:s4t-char}
  Let $D$ be a strong $4$-transitive digraph. Then exactly one of the following
  possibilities holds
  \begin{enumerate}
    \item $D$ is a complete digraph.
    \item $D$ is a 3-cycle extension.
    \item $D$ has circumference 3, a 3-cycle extension as a spanning subdigraph
      with cyclical partition $\{ V_0, V_1, V_2\}$, at least one symmetrical arc
      exists in $D$ and for every symmetrical arc $(v_i,v_{i+1})\in A(D)$, with
      $v_j \in V_j$ for $j \in \{i, i + 1\}$ (mod 3), $|V_i| = 1$ or $|V_{i+1}|
      = 1$.
    \item $D$ has circumference 3, $UG(D)$ is not 2-edge-connected and ${S_1,
      S_2,\dots , S_r}$ are the vertex sets of the maximal 2-edge connected
      subgraphs of $UG(D)$, with $S_i = \{ u_i\}$ for every $2\le i \le r$ and
      such that $D[S_1]$ has a 3-cycle extension with cyclical partition $\{
      V_0, V_1, V_2\}$ as a spanning subdigraph. A vertex $v_0 \in V_0$ (without
      loss of generality) exists such that $v_0 \leftrightarrow_D u_j$, for
      every $2\le j \le n$. Also $|V_0| = 1$ and $D[S_1]$ has the structure
      described in $1.$ or $2.$, depending on the existence of symmetrical arcs.
    \item A complete biorientation of a 5-cycle.
    \item $D$ is a complete biorientation of the star $K_{1,r}$, $r \ge 3$.
    \item $D$ is a complete biorientation of a tree with diameter 3.
    \item $D$ is a strong digraph of order less than or equal to 4 not included
      in the previous families.
  \end{enumerate}

\end{theorem}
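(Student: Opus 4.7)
The plan is a case analysis on the circumference $c(D)$ and on the interplay between symmetrical and asymmetrical arcs. The main technical tool is that any $4$-path $v_0 v_1 v_2 v_3 v_4$ forces the arc $(v_0, v_4)$, so long cycles in $D$ immediately produce many chord arcs, which in turn produce further $4$-paths; controlling this propagation is what pins down the global structure.

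I would first dispose of the symmetric case. If every arc of $D$ is symmetrical, then $D$ is the complete biorientation of an undirected graph $G$, and $4$-transitivity becomes the condition that any two vertices at distance exactly $4$ in $G$ are adjacent, hence $\mathrm{diam}(G) \le 3$. A short forbidden-subgraph argument, ruling out induced paths of length $\ge 4$ on at least six vertices and similar obstructions, forces $G$ to be complete (case $1$), a star $K_{1,r}$ (case $6$), a tree of diameter $3$ (case $7$), or a $5$-cycle (case $5$).

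Next I would bound $c(D)$. Starting from a shortest $k$-cycle with $k \ge 5$, the arcs forced by $4$-transitivity either collapse $D$ to a small digraph of order at most $4$ (case $8$) or yield a $4$-path whose endpoints lie on the chosen cycle at non-adjacent positions, contradicting minimality. In the borderline case $k = 5$, iterating the forced chords shows that all ``antipodal'' arcs must be present and that no extra vertex can be attached without generating a longer cycle; combined with strong connectedness this forces case $5$. So for the remaining cases we may assume $c(D) \le 3$.

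The bulk of the work is then the case $c(D) = 3$ with at least one asymmetrical arc. Any $3$-cycle, together with its in- and out-neighbours, generates $4$-paths that progressively partition $V(D)$ into three classes $V_0, V_1, V_2$ behaving like the blocks of a $3$-cycle extension, giving case $2$. The main obstacle is incorporating symmetrical arcs into this partition: a symmetrical arc between vertices of $V_i$ and $V_{i+1}$ creates long $4$-paths that wrap around through the third class, and consistency with $4$-transitivity forces $|V_i| = 1$ or $|V_{i+1}| = 1$, giving case $3$. The subcase where the underlying graph loses $2$-edge-connectedness is handled by identifying the unique $3$-cycle-extension block $S_1$ and pinning down how the remaining vertices of $UG(D)$ must attach via symmetrical arcs from a single class $V_0 = \{v_0\}$, giving case $4$. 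Digraphs of order at most $4$ not already covered are enumerated directly to produce case $8$; the delicate bookkeeping in case $3$, tracking how multiple symmetrical arcs can coexist without violating $4$-transitivity, is where most of the effort is concentrated.
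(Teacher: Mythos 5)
First, a structural remark: the paper you are reviewing does not prove this theorem at all --- it is imported verbatim from \cite{hernandezDMGT33} and used as a black box in Section 4. So there is no internal proof to compare your sketch against, and I can only judge it on its own merits as a reconstruction of the cited result.

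Judged that way, there is a concrete gap in your symmetric case, and it is not cosmetic. If $D$ is the complete biorientation of a connected graph $G$, then $4$-transitivity says that the endpoints of every (not necessarily induced) path on five vertices of $G$ are adjacent; this is a condition about the existence of a $P_5$ between two vertices, not about their distance being exactly $4$ (a $P_5$ can join vertices at distance $1$ or $2$). Your claimed conclusion --- that $G$ must be complete, a star, a tree of diameter $3$, or $C_5$ --- is false. Take $G$ to be a triangle $abc$ with $r\ge 2$ pendant vertices all attached to $a$: every path in $G$ has at most four vertices, so its complete biorientation is vacuously $4$-transitive, strong, has $r+3\ge 5$ vertices, and is none of the four shapes you list. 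It is precisely an instance of family $4$ of the theorem (the maximal $2$-edge-connected pieces are $\{a,b,c\}$ and the singletons, with $a\leftrightarrow_D d_i$ for every pendant $d_i$), so your symmetric branch never produces family $4$ and the case analysis does not close. Beyond this, the sketch is only schematic exactly where the theorem is hard: the circumference-bounding step does not explain why the cascade of forced chords from a long cycle terminates in completeness rather than some other stable configuration, and the separation of families $2$, $3$ and $4$ --- how symmetric arcs interact with the $3$-cycle-extension partition, and how the cut-vertex structure of family $4$ emerges --- is asserted rather than argued. The overall plan (classify by circumference, grow a $3$-cycle extension in the circumference-$3$ case) does match the strategy of the published proof, but as written the proposal both omits an entire outcome family and leaves the decisive steps unproved.
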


With the characterization in Theorem \ref{thm:s4t-char} we determine the
$4$-transitive critical kernel imperfect digraphs.

\begin{theorem}\label{thm:4tCKI} If $D$ is a $4$-transitive critical kernel
    imperfect digraph, then $D \cong \overrightarrow{A}_3$ or $D\cong
    \overrightarrow{A}_4$.
\end{theorem}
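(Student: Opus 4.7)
The plan is to combine the Berge--Duchet theorem (every CKI digraph is strong) with the structural description in \Cref{thm:s4t-char}, splitting the analysis of a hypothetical $4$-transitive CKI digraph $D$ across the eight families listed there. In the ``biorientation'' families (cases 1, 5, 6, and 7) an explicit kernel can be produced at once: any vertex in a complete digraph; two vertices at distance two in the biorientation of $C_5$; the set of leaves in the biorientation of $K_{1,r}$; and the set of all leaves in the biorientation of a diameter-$3$ tree. None of these is CKI, so these cases contribute nothing.

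For the three families built from $3$-cycle extensions (cases 2, 3, and 4), I would first observe that a pure $3$-cycle extension admits no kernel, and that deleting a vertex from any part $V_i$ of size $\ge 2$ produces a smaller $3$-cycle extension, still without a kernel; CKI minimality therefore forces each $|V_i|=1$, collapsing case 2 to $\overrightarrow{C}_3=\overrightarrow{A}_3$. For cases 3 and 4, the constraint that $|V_i|=1$ or $|V_{i+1}|=1$ at each symmetric arc, together with the pendant bi-oriented vertices in case 4, can be leveraged to assemble an independent absorbent set (typically using an endpoint of the symmetric arc from the singleton part together with the pendant vertices) whenever the structure strictly extends $\overrightarrow{C}_3$; so these cases again collapse to $\overrightarrow{A}_3$.

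The remaining family is case 8 (strong digraphs of order at most $4$ not covered by cases 1--7), for which $4$-transitivity is vacuous since a $4$-path requires five distinct vertices. I would enumerate strong digraphs of order $\le 4$ up to isomorphism, discard those already covered by cases 1--7, and check each for CKI directly. Among order $3$ only $\overrightarrow{C}_3=\overrightarrow{A}_3$ survives. Among order $4$, $\overrightarrow{A}_4$ has circumference $4$ and $K_4$ as underlying graph, so it escapes cases 1--7; it is CKI by \Cref{thm:semicompCKI}. Every other candidate either properly contains $\overrightarrow{A}_3$ or $\overrightarrow{A}_4$ as an induced subdigraph (violating minimality), or admits a two-vertex kernel found by inspection.

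The main obstacle will be the finite but non-trivial enumeration required for case 8, together with the careful matching in cases 3 and 4 between the spanning $3$-cycle-extension structure and the additional symmetric and pendant bi-oriented arcs: one must verify that every legitimate ``extension'' of $\overrightarrow{C}_3$ in these cases yields either a kernel or a proper induced subdigraph lacking a kernel, and hence cannot produce a new CKI digraph besides $\overrightarrow{A}_3$ and $\overrightarrow{A}_4$.
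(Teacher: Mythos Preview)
Your overall architecture matches the paper's: reduce to strong digraphs via Berge--Duchet, apply \Cref{thm:s4t-char}, and treat the eight families separately. Your handling of families 1, 5, 6, 7 (explicit kernels) and family 2 (forcing each $|V_i|=1$) is correct and agrees with the paper.

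There is, however, a genuine gap in your treatment of cases 3 and 4. You assert that whenever the structure strictly extends $\overrightarrow{C}_3$ one can ``assemble an independent absorbent set,'' but this is false. Take $V_0=\{v_0\}$, $V_1=\{v_1,v_1'\}$, $V_2=\{v_2\}$ with all $3$-cycle-extension arcs together with the single symmetric arc $v_1\to v_0$. This digraph lies in family~3, strictly extends $\overrightarrow{C}_3$, and has \emph{no} kernel (every singleton fails to absorb some vertex, and any two vertices from different parts are adjacent, while $\{v_1,v_1'\}$ does not absorb $v_2$). The paper's argument is cleaner and sidesteps this: every digraph in families 2, 3, 4 contains an \emph{induced} copy of $\overrightarrow{C}_3$ (in the example above, $\{v_0,v_1',v_2\}$), so by CKI minimality no digraph in families 3 or 4 can be CKI, and the only one in family 2 is $\overrightarrow{C}_3$ itself. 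Your fallback clause in the last paragraph (``or a proper induced subdigraph lacking a kernel'') is precisely this; you should promote it to the main argument and drop the kernel-construction claim.

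For case 8, full enumeration works, but the paper shortens it by first invoking \Cref{thm:ckiless7} to handle the asymmetrical case and \Cref{thm:semicompCKI} for the semicomplete case, leaving only a brief ad hoc analysis of non-asymmetrical, non-semicomplete strong digraphs on four vertices.
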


\begin{proof}
Let $D$ be a $4$-transitive critical kernel imperfect digraph. Clearly, $D$
satisfies exactly one of the possibilities of Theorem \ref{thm:s4t-char}. It is
not difficult to see that if a digraphs is in the family $1$, $5$, $6$, or $7$
of Theorem \ref{thm:s4t-char}, then the digraph has a kernel, moreover it is a
kernel-perfect digraph. Hence, $D$ is in family $2$, $3$, $4$, or $8$. Notice
that $\overrightarrow{C_3}$ is in family $2$, and every digraph which is in
family $2$, $3$, or $4$, contains $\overrightarrow{C_3}$ as an induced
subdigraph. Hence, we have that if $D$ is in family $2$, then $D\cong
\overrightarrow{C_3}$, and  $D$ is not in family $3$ or $4$.

Suppose that $D$ is in family 8. By Theorem \ref{thm:ckiless7}, if $D$ is an
asymmetrical digraph, then $D\cong \overrightarrow{C_3}$. Even more, by Theorem
\ref{thm:semicompCKI}, if $D$ is a semicomplete digraph, then $D\cong
\overrightarrow{C_3}$ or $D \cong \overrightarrow{A_4}$. Now assume that $D$ is
not an asymmetrical and $D$ is not semicomplete. Since every digraph with at
most 3 vertices is kernel-perfect or is $\overrightarrow{C_3}$, then $D$ has 4
vertices. Let $\{ u,v,z,w \}$ be the vertex set of $D$. Since it is not
semicomplete, the independence number of $D$ is at least $2$, but it must be
exactly $2$, as otherwise $D$ has a kernel. Suppose without loss of generality
that there is no arc between $u$ and $v$. Since $D$ has no kernel, then $\{
u,v\}$ is not an absorbent set. We can assume that $(w,u)$ and $(w,v)$ are not
arcs of $D$. Moreover, since $\{ u,v\}$ is independent and $D$ is strong, it
follows that $(w,z),(z,u),(z,v)\in A(D)$. We know that $\{z\}$ is not a kernel
of $D$, and thus, $(u,z)$ or $(v,z)$ is not an arc of $D$. Suppose, without loss
of generality, that $(v,z) \notin A(D)$, it follows that $(v,w)\in A(D)$. Since
$D$ is critical kernel imperfect, it has no $\overrightarrow{C_3}$ as induced
subdigraph. Thus $(z,w)\in A(D)$. Note that $(u,w) \notin A(D)$, otherwise $\{
w\}$ is not a kernel of $D$. It follows that $\{ u,w\}$ is a kernel of $D$,
contradicting that $D$ is a critical kernel imperfect. Hence $D$ has to be a
semicomplete or an asymmetrical digraph.
\end{proof}

As a consequence of Theorem \ref{thm:4tCKI}, we have the following result.

\begin{theorem}
  If $D$ is a $4$-transitive digraph, then $D$ is kernel-perfect if and only if
  it is $\{ \overrightarrow{A_3}, \overrightarrow{A_4} \}$-free.
\end{theorem}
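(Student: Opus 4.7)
The plan is to obtain this theorem as an immediate corollary of \Cref{thm:4tCKI}, using the well-known equivalence, restated in the introduction of this paper, that a digraph is kernel-perfect if and only if it contains no critical kernel imperfect digraph as an induced subdigraph. With that equivalence in hand, the only additional ingredient I need is that the class of $4$-transitive digraphs is closed under taking induced subdigraphs, which is immediate from the definition: any $4$-path in an induced subdigraph is also a $4$-path in the ambient digraph, so the arc forced by $4$-transitivity lies in the induced subdigraph as well.

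For the forward direction, suppose $D$ is kernel-perfect. Then $D$ contains no CKI digraph as an induced subdigraph. In particular, since $\overrightarrow{A}_3$ and $\overrightarrow{A}_4$ are directed antiholes and hence CKI by \Cref{thm:semicompCKI}, $D$ must be $\{\overrightarrow{A}_3, \overrightarrow{A}_4\}$-free.

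For the reverse direction, I would argue contrapositively. Assume $D$ is $4$-transitive and not kernel-perfect; then $D$ contains some critical kernel imperfect digraph $H$ as an induced subdigraph (one may, for concreteness, take $H$ to be a minimum-order induced subdigraph of $D$ with no kernel, so that $H$ is CKI). By the heredity observation above, $H$ is itself $4$-transitive, so \Cref{thm:4tCKI} applies and forces $H \cong \overrightarrow{A}_3$ or $H \cong \overrightarrow{A}_4$. Consequently $D$ is not $\{\overrightarrow{A}_3, \overrightarrow{A}_4\}$-free, completing the contrapositive.

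I do not expect any genuine obstacle. The substance of the theorem lives inside \Cref{thm:4tCKI}, and this corollary is really a reformulation that packages the classification of CKI members of the class into a forbidden-induced-subdigraph criterion for kernel-perfectness. The only points worth stating explicitly are the heredity of $4$-transitivity and the CKI-ness of $\overrightarrow{A}_3$ and $\overrightarrow{A}_4$, both of which are one-line consequences of results already cited in the paper.
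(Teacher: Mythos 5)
Your argument is correct and is exactly the derivation the paper intends: the paper states this result without proof as an immediate consequence of \Cref{thm:4tCKI}, relying on the same two observations you make explicit, namely the heredity of $4$-transitivity under induced subdigraphs and the equivalence between kernel-perfectness and the absence of CKI induced subdigraphs. Nothing is missing.
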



\section{Antitransitive digraphs}
\label{sec:anti}

Let $k$ be an integer, $k\ge 2$. A digraph $D$ is $k$-anti-transitive if for
every pair of distinct vertices $u$ and $v$ of $D$, the existence of a $k$-path
from $u$ to $v$ in $D$ implies that $(u,v)$ is not an arc of $D$. We denote by
$TT_3$ the asymmetrical transitive tournament with order 3.

\begin{lemma}
\label{lem:2at-noTT3}
    Let $D$ be a $2$-anti-transitive digraph, and let $v$ be a vertex of $D$.
    The following statements hold
    \begin{enumerate}
        \item $D$ has no $TT_3$ as subdigraph.
        \item $N^+(v)$ and $N^-(v)$ are independent sets in $D$.
    \end{enumerate}
\end{lemma}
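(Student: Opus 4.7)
The plan is that both items reduce directly to the definition of $2$-anti-transitivity, so the proof should be short and the only real task is to spell out the forbidden $2$-paths.

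For item 1, I would argue by contradiction. Suppose $D$ contains a copy of $TT_3$ with vertex set $\{a,b,c\}$ and arcs $(a,b)$, $(b,c)$, $(a,c)$. Then $(a,b,c)$ is a $2$-path from $a$ to $c$, while $(a,c)$ is an arc of $D$; this directly contradicts the $2$-anti-transitivity of $D$. Hence no such triple exists, and $D$ is $TT_3$-free.

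For item 2, I would again proceed by contradiction, handling $N^+(v)$ and $N^-(v)$ in parallel. Suppose first that $N^+(v)$ is not independent, so there exist $u_1,u_2 \in N^+(v)$ with $(u_1,u_2) \in A(D)$. Then $(v,u_1,u_2)$ is a $2$-path from $v$ to $u_2$, while $(v,u_2) \in A(D)$, contradicting $2$-anti-transitivity. Symmetrically, if $w_1,w_2 \in N^-(v)$ satisfy $(w_1,w_2) \in A(D)$, then $(w_1,w_2,v)$ is a $2$-path from $w_1$ to $v$ and $(w_1,v) \in A(D)$, again a contradiction. Therefore both $N^+(v)$ and $N^-(v)$ are independent.

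There is no real obstacle here; the whole proof is just two applications of the forbidden-pattern reading of the definition. In fact item 2 could be phrased as a corollary of item 1, since $\{v,u_1,u_2\}$ and $\{w_1,w_2,v\}$ under the hypothesized arcs would induce copies of $TT_3$. I would likely present it that way to keep the writing compact.
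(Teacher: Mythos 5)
Your proof is correct and follows essentially the same route as the paper: item 1 is the direct forbidden-pattern reading of $2$-anti-transitivity, and item 2 reduces to item 1 (the paper in fact presents item 2 exactly as the corollary you mention at the end). No issues.
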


\begin{proof}
Let $D$ be a $2$-anti-transitive digraph. Note that if $TT_3$ is a subdigraph of
$D$, then there are vertices $x$ and $y$ such that there is a $2$-path from $x$
to $y$ and $(x,y)$ is an arc of $D$, which is impossible by the definition of
$2$-anti-transitive digraph. For the second item, if $v$ is a vertex of $D$ and
there is an arc between two vertices of $N^+(v)$ ($N^-(v)$), then $TT_3$ is a
subdigraph of $D$, which is also impossible by 1.
\end{proof}

\begin{theorem}
    The unique $2$-anti-transitive critical kernel imperfect of diameter $2$ is
    $\overrightarrow{A}_{3}$.
\end{theorem}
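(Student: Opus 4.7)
The plan is to reduce the claim to Lemma~\ref{lem:4qtdiam2}. A quick inspection of the proof of that lemma shows that $4$-quasi-transitivity is never actually invoked; it really characterizes every asymmetrical CKI digraph of diameter $2$. Consequently, it suffices to prove that any $2$-anti-transitive CKI digraph of diameter $2$ must be asymmetrical, and then quote Lemma~\ref{lem:4qtdiam2}.

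To establish asymmetry, I would fix an arbitrary vertex $x$ of $D$ and, using that $D$ is strong (by Berge--Duchet) and has diameter $2$, partition $V(D)\setminus\{x\}=S_1\sqcup S_2$ with $S_i=\{v:d(v,x)=i\}$; in particular $S_1=N^-(x)$. By Lemma~\ref{lem:2at-noTT3}, $S_1$ is an independent set. Moreover, by the very definition of distance, every vertex $v\in S_2$ has an out-neighbour in $S_1$ along any shortest $v$--$x$ path, so $S_1$ automatically absorbs all of $S_2$. The only way $S_1$ can fail to be absorbent is therefore that $x$ itself lacks an out-neighbour in $S_1$, i.e., $N^+(x)\cap N^-(x)=\varnothing$. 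Since $D$ is CKI and $S_1$ is independent, $S_1$ cannot be a kernel of $D$, forcing $x$ to have no digon partner. As $x$ was arbitrary, $D$ is asymmetrical. (The degenerate case $S_2=\varnothing$ is harmless, since it would make $\{x\}$ a kernel of $D$, contradicting CKI.)

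Having shown asymmetry, Lemma~\ref{lem:4qtdiam2} immediately gives $D\cong\overrightarrow{A}_3$, completing the proof. The only mildly delicate step is the combination of \emph{independence of $S_1$} (via the $TT_3$-freeness given by Lemma~\ref{lem:2at-noTT3}) with the \emph{automatic absorption of $S_2$ by $S_1$}; once those two observations are in place, the CKI hypothesis forces digon-freeness at each vertex and no real obstacle remains.
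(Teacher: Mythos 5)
Your proof is correct, but it is organized differently from the paper's. Both arguments hinge on the same set $S_1=N^{-}(v)$: it is independent by Lemma~\ref{lem:2at-noTT3}, and it automatically absorbs every vertex at distance $2$ from $v$, so the only question is whether it absorbs $v$ itself. The paper assumes $D\ncong\overrightarrow{A}_3$, hence that $D$ is $\overrightarrow{A}_3$-free, and uses this together with $TT_3$-freeness to prove $N^{+}(v)\subseteq S_1$ (an out-neighbour of $v$ at distance $2$ would close a $3$-cycle through $v$, which would have to carry a symmetric arc and hence a $TT_3$); this turns $S_1$ into a kernel and yields the contradiction directly. You instead read the guaranteed failure of the independent set $S_1$ to be a kernel as the statement $N^{+}(v)\cap N^{-}(v)=\varnothing$ for every $v$, i.e.\ asymmetry of $D$, and then quote Lemma~\ref{lem:4qtdiam2} --- whose statement already carries no $4$-quasi-transitivity hypothesis, so you do not even need to inspect its proof as you suggest. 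Your reduction is slightly more economical, since the $\overrightarrow{C}_3$ analysis is delegated to that earlier lemma, at the price of an extra dependency across sections; the paper's version is self-contained within Section~\ref{sec:anti}. (A minor remark: the parenthetical about $S_2=\varnothing$ is correct but not actually needed, since your absorbency analysis goes through unchanged when $S_2$ is empty.)
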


\begin{proof}
Let $D$ be an $2$-anti-transitive critical kernel imperfect of diameter 2.   If
$D$ is isomorphic to $\overrightarrow{A}_{3}$, there is nothing to prove.
Proceeding by contradiction, suppose that $D \ncong \overrightarrow{A}_3$. Being
critical kernel imperfect, $D$ is $\overrightarrow{A}_3$-free.

Let $v$ be a vertex of $D$. Consider the sets $S_i=\{ x \in V(D) \colon\
d(x,v)=i\}$ with $i \in\{ 1,2\}$. By hypothesis $V(D)=\{v\} \cup S_1\cup S_2$.
Moreover, since $D$ has no kernel, then $S_2 \ne \varnothing$. By Lemma
\ref{lem:2at-noTT3} it follows that $S_1=N^-(v)$ is an independent set of $D$.
We claim that $N^+(v)\subseteq S_1$. Let $x$ be a vertex in $ N^+(v)$. If $x
\notin S_1$, then $x \in S_2$. Thus there is $w \in S_1$ such that $(x,w,v)$ is
a $2$-path in $D$, which implies that $\gamma=(x,w,v,x)$ is a $3$-cycle in $D$.
Since $D$ is $\overrightarrow{A}_3$-free, then there is a symmetrical arc in
$\gamma$, it follows that $TT_3$ is a subdigraph of $D$, which is impossible.
Hence, $N^+(v)\subseteq S_1$.

By Lemma \ref{lem:2at-noTT3}, $S_1=N^-(v)$ is an independent set of $D$. Let $u
\in V(D) \setminus S_1$. If $u \in S_2$, then there is $w \in S_1$ such that $u
\to_D w$. On the other hand, $D$ is strong, and since $N^+(v)\subseteq S_1$, if
$u=v$, then is $w \in S_1$ such that $x \to_D w$. Therefore, $S_1$ is an
absorbent set and a kernel of $D$, which is a contradiction.
\end{proof}

\begin{lemma}
\label{lem:asym-4at-neigh}
    Let $D$ be an asymmetrical $\overrightarrow{C}_3$-free, $4$-anti-transitive
    digraph with diameter $3$. If $v \in V(D)$, then the following statements
    hold.
    \begin{enumerate}
        \item $d_D(w,v)=3$, for each $w \in N^+(v)$.
        \item $N^+(v)$ is an independent set.
        \item $N^-(v)$ is an independent set.
    \end{enumerate}
\end{lemma}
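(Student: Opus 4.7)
The plan is to prove the three items in the stated order, with parts 2 and 3 both leveraging part 1 (and an immediate in-neighbour analogue) together with the 4-anti-transitivity hypothesis.

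For part 1, I would analyze $d_D(w,v)$ for an arbitrary $w \in N^+(v)$. Since $(v,w)$ is an arc and $D$ is asymmetrical, $d_D(w,v) \geq 2$. If $d_D(w,v)=2$, witnessed by a 2-path $(w,x,v)$, then together with $(v,w)$ we obtain the 3-cycle $(v,w,x,v)$, contradicting that $D$ is $\overrightarrow{C}_3$-free. Since the diameter of $D$ is $3$, this forces $d_D(w,v)=3$.

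For part 2, I would argue by contradiction: suppose $w_1, w_2 \in N^+(v)$ are joined by an arc, say $(w_1,w_2)\in A(D)$. By part 1, $d_D(w_1,v)=3$, so there is a shortest path $P=(w_1,a_1,a_2,v)$. Extending $P$ with the arc $(v,w_2)$ gives the walk $(w_1,a_1,a_2,v,w_2)$, which I claim is a genuine $4$-path; then $4$-anti-transitivity rules out the arc $(w_1,w_2)$, the desired contradiction. The main obstacle is checking that the five vertices are pairwise distinct. The only nontrivial coincidences to exclude are $w_2=a_2$, ruled out by asymmetry (since $a_2\to v$ while $v\to w_2$), and $w_2=a_1$, ruled out by part 1 applied to $w_2$, because $w_2=a_1$ would yield the $2$-path $(w_2,a_2,v)$ and so $d_D(w_2,v)\le 2$.

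Part 3 will be handled by a dual argument once I establish the symmetric analogue of part 1, namely $d_D(v,w)=3$ for every $w\in N^-(v)$; this is proved by the same short case analysis (asymmetry kills distance $1$, $\overrightarrow{C}_3$-freeness kills distance $2$). Assuming for contradiction that $w_1,w_2\in N^-(v)$ satisfy $(w_1,w_2)\in A(D)$, I would pick a shortest path $(v,c_1,c_2,w_2)$ of length $3$ and prepend the arc $(w_1,v)$, producing the candidate $4$-path $(w_1,v,c_1,c_2,w_2)$. The delicate distinctness check here is that $w_1\neq c_2$: if $w_1=c_2$, then $(v,c_1,w_1)$ would be a $2$-path to $w_1$, contradicting $d_D(v,w_1)=3$; the remaining coincidences are excluded by asymmetry ($w_1\to v$ prevents $w_1=c_1$). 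Once distinctness is confirmed, $4$-anti-transitivity contradicts the assumed arc $(w_1,w_2)$. Throughout, the real work is selecting the right $4$-path to exhibit and then verifying its five vertices are distinct; after that, the $4$-anti-transitivity hypothesis closes the argument automatically.
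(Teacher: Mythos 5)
Your proof is correct and follows essentially the same route as the paper: distance $1$ is ruled out by asymmetry, distance $2$ by $\overrightarrow{C}_3$-freeness, and parts 2 and 3 are obtained by routing a $4$-path through $v$ and invoking $4$-anti-transitivity. The only difference is that you explicitly verify that the five vertices of each candidate $4$-path are distinct (a check the paper's proof leaves implicit), and that you reprove the in-neighbour analogue of part 1 rather than just applying part 1 at the other endpoint; both are harmless.
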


\begin{proof}
Let $w$ be in $N^+(v)$. Since $D$ is an asymmetrical digraph with diameter $3$,
then $1<d_D(w,v)\le 3$, moreover if $d_D(w,v)=2$, then $\overrightarrow{C}_{3}$
is an induced subdigraph of $D$, which is impossible. Hence $d_D(w,v)=3$.

For the second statement, let $w$ and $u$ be two vertices of $N^+(v)$. By the
first statement, we have that $d_D(w,v)=3$. Thus there is $(w,z,x,v)$ a path
from $w$ to $v$ in $D$. It implies that $(w,z,x,v,u)$ is a path from $w$ to $u$
in $D$ with length 4. Since $D$ is a $4$-anti-transitive digraph, then $(w,u)$
is not an arc of $D$. Therefore, $N^+(v)$ is an independent set.

For the third statement, let $x$ and $y$ be two vertices of $N^-(v)$. Note that
$v \in N^+(x)$, by the first statement it follows that $d_D(v,x)=3$. Thus there
is $(v,z,u,x)$ a path from $v$ to $x$ in $D$. It follows that $(y,v,z,u,x)$ is a
path from $y$ to $x$ in $D$ with length 4. Since $D$ is a $4$-anti-transitive
digraph, then $(y,x)$ is not an arc of $D$. Therefore, $N^-(v)$ is an
independent set.
\end{proof}

\begin{theorem}
    The unique asymmetrical $4$-anti-transitive critical kernel imperfect
    digraph with diameter $3$ is $\overrightarrow{C}_5$.
\end{theorem}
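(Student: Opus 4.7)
The plan is to proceed by contradiction: assume $D$ is an asymmetrical $4$-anti-transitive critical kernel imperfect digraph of diameter $3$ with $D \not\cong \overrightarrow{C}_5$, and derive a contradiction. Because $\overrightarrow{C}_3$ is itself critical kernel imperfect but of diameter $2$, the critical kernel imperfect hypothesis prevents $D$ from properly containing $\overrightarrow{C}_3$, and since $D$ has diameter $3$ it is not $\overrightarrow{C}_3$ itself; hence $D$ is $\overrightarrow{C}_3$-free and all the hypotheses of \Cref{lem:asym-4at-neigh} apply. By the Berge--Duchet theorem quoted in the introduction, $D$ is also strong.

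Choose a vertex $v$ realising the diameter and partition $V(D) = \{v\} \cup S_1 \cup S_2 \cup S_3$, where $S_i = \{x \in V(D) : d(x,v) = i\}$; by the choice of $v$, all three layers are nonempty. From \Cref{lem:asym-4at-neigh} I obtain that $S_1 = N^-(v)$ and $N^+(v)$ are independent sets, and from part $1$ of that lemma I obtain $N^+(v) \subseteq S_3$. In particular, every arc out of $v$ lands in $S_3$, and for each $w \in N^+(v)$ there is a certificate $3$-path $w \to a \to b \to v$ with $a \in S_2$ and $b \in S_1$.

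The next step is to exploit $4$-anti-transitivity on the $4$-paths obtained by concatenating such a certificate with one further arc at either end. Concatenating $w \to a \to b \to v$ with any outgoing arc $v \to w'$ recovers the independence of $N^+(v)$, while concatenating with an arc $z \to w$ coming from $S_3$ forces $(z,v) \notin A(D)$ and similar non-arcs across layers. Iterating these ideas in the spirit of \Cref{lem:4qt-dia4o5,lem:4qt-dia4o5-2}, I expect to show that $S_3$ is independent, that arcs from $S_2$ into $S_3$ are almost entirely forbidden, and that the whole digraph is squeezed into a very rigid cyclic structure.

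Finally, I would assemble a kernel in order to contradict the critical kernel imperfect hypothesis. The natural candidate is a kernel of a suitable proper induced subdigraph, such as $D[V(D) \setminus \{v\}]$ or $D[S_2]$, which exists by the CKI minimality, augmented by one or two carefully chosen vertices of the outer layers to achieve absorbance of $v$ and the remaining levels. The main obstacle will be the usual one when working with anti-transitivity: the hypothesis only forbids arcs rather than produces them, so the bulk of the work lies in extracting enough positive arc information from the distance-$3$ constraint to certify absorbance. Once the kernel is exhibited, the contradiction closes the argument and leaves $D \cong \overrightarrow{C}_5$ as the only remaining possibility.
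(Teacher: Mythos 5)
Your opening moves match the paper's: contradiction, $\overrightarrow{C}_3$-freeness from CKI minimality, the distance layering $\{v\}\cup S_1\cup S_2\cup S_3$, and the consequences of \Cref{lem:asym-4at-neigh} ($S_1=N^-(v)$ and $N^+(v)$ independent, $N^+(v)\subseteq S_3$). But the middle and end of your sketch contain genuine gaps. The most serious one: you assume $D\not\cong\overrightarrow{C}_5$ and then never use it. Since $\overrightarrow{C}_5$ \emph{is} an asymmetrical $4$-anti-transitive CKI digraph of diameter $3$, any argument that does not invoke this hypothesis somewhere cannot close. The paper uses it precisely once, and it is the crux: because $D$ is CKI and not $\overrightarrow{C}_5$, $D$ is $\overrightarrow{C}_5$-free, and this is what rules out arcs from $N^+(v)$ to $S_3\setminus N^+(v)$ --- such an arc $(x,y)$ would close a $5$-cycle $(v,x,y,u',w',v)$ through $S_2$ and $S_1$, and after checking all five diagonals are absent (using $S_1\leftrightarrow_{D^c}S_3$, $\overrightarrow{C}_3$-freeness, and the independence of $N^+(x)$ from \Cref{lem:asym-4at-neigh}) this cycle would be induced. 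Your sketch never identifies this step, and anti-transitivity alone will not produce it.

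The second gap is the kernel. Your candidates (a kernel of $D-v$ or of $D[S_2]$ augmented by ``one or two carefully chosen vertices'') are not the right ones, and your hope that ``$S_3$ is independent'' is neither established nor needed: $S_3\setminus N^+(v)$ need not be independent, which is exactly why the paper instead takes a kernel $K_3$ of the proper induced subdigraph $D[S_3\setminus N^+(v)]$ (available by CKI minimality) and sets $K=S_1\cup N^+(v)\cup K_3$. Independence of $K$ rests on the two nonadjacency claims $S_1\leftrightarrow_{D^c}S_3$ (proved via a $4$-path $(x,y,u,w,v)$, with a small case analysis when $w=x$) and $(S_3\setminus N^+(v))\leftrightarrow_{D^c}N^+(v)$ (the $\overrightarrow{C}_5$ argument above); absorbance is then immediate since $v\to N^+(v)$, $S_2\to S_1$ by definition of the layers, and $K_3$ absorbs the rest of $S_3$. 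Without these two specific claims and this specific assembly, the ``rigid cyclic structure'' you anticipate does not materialize, so as written the proposal is an outline of the right opening followed by an unproven conclusion.
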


\begin{proof}
    Let $D$ be an asymmetrical $4$-anti-transitive critical kernel imperfect of
    diameter 3. Looking for a contradiction, suppose that $D \ncong
    \overrightarrow{C}_5$. Since it is critical kernel imperfect, $D$ is
    $\overrightarrow{C}_{2n+1}$-free, for every $n \in \mathbb{N}$.

    Let $v_0$ be a vertex of $D$ such that the sets $S_i=\{ x \in V(D) \colon\
    d(x,v_0)=i\}$ are nonempty for $i \in \{1,2,3\}$. By hypothesis $V(D) = \{
    v_0 \} \cup S_1\cup S_2\cup S_3$. Note that $S_1=N^-(v_0)$ and, by Lemma
    \ref{lem:asym-4at-neigh}, $S_1$ is an independent set of $D$. By definition
    $S_i \to_{D^c} v_0$ with $i \in\{ 2,3\}$. Moreover, there is no arc from
    $v_0$ to $y$, for every $y \in S_2$, otherwise $\overrightarrow{C}_3$ is an
    induced subdigraph of $D$, which is impossible. From above, it follows that
    $N^+(v_0)\subseteq S_3$, moreover, since $D$ is strong, then $N^+(v_0)$ is
    nonempty.

    We claim that $S_1 \leftrightarrow_{D^c}S_3$. By definition, $S_3
    \to_{D^c}S_1$. To prove $S_1 \to_{D^c}S_3$, we proceed by contradiction.
    Suppose that there are $x \in S_1$ and $y$ in $S_3$ such that $(x,y)$ is an
    arc of $D$. Since $y \in S_3$, then there are $u \in S_2$ and $w \in S_1$
    such that $(y,u,w,v_0)$ is a path in $D$. Observe that $w \ne x$, otherwise
    $(y,u,x,y)$ is an induced $\overrightarrow{C}_3$ in $D$, which is
    impossible. It follows that $(x,y,u,w,v_0)$ is a path in $D$ with length 4.
    Since $D$ is $4$-anti-transitive, we have that $(x,v_0)$ is not an arc of
    $D$, which is a contradiction. Therefore, $S_1 \leftrightarrow_{D^c}S_3$.

    It follows from Lemma \ref{lem:asym-4at-neigh} that $N^+(v_0)\subseteq S_3$.
    We now prove that $(S_3 \setminus N^+(v_0))\leftrightarrow_{D^c}N^+(v_0)$.
    Let $x \in N^+(v_0)$ and $y \in S_3 \setminus N^+(v_0)$. By definition,
    there are $u \in S_2$ and $w \in S_1$ such that $(y,u,w,v_0)$ is a path in
    $D$ and, even more $(y,u,w,v_0,x)$ is a path in $D$ of length $4$. Since $D$
    is $4$-anti-transitive, we have that $(y,x)$ is not an arc in $D$. To prove
    that $(x,y)$ is not an arc of $D$ suppose, for the sake of contradiction,
    that $(x,y)\in A(D)$. By definition of $S_3$, there are $u'\in S_2$ and
    $w'\in S_1$ such that $(y,u',w',v_0)$ is a path in $D$. It follows that
    $\gamma=(v_0, x,y,u',w',v_0)$ is a $5$-cycle in $D$. We know that $S_1
    \leftrightarrow_{D^c}S_3$, thus $x \leftrightarrow_{D^c}w'$ and $y
    \leftrightarrow_{D^c}w'$. Since $S_2 \leftrightarrow_{D^c}v_0$, then
    $u'\leftrightarrow_{D^c}v_0$. By definition of $S_3 \setminus N^+(v_0)$, we
    also have $v_0 \leftrightarrow_{D^c}y$. In addition, $(u',x)$ is not an arc
    of $D$, because $D$ is $\overrightarrow{C}_3$-free. Even more, $(x,u')$ is
    neither an arc of $D$, because we would have $y,u'\in N^+(x)$, and Lemma
    \ref{lem:asym-4at-neigh} states that $N^+(x)$ is an independent set, but
    $(y,u')$ is an arc of $D$. Thus, $x \leftrightarrow_{D^c}u'$, and we
    conclude that $\gamma$ is an induced $\overrightarrow{C}_5$ in $D$, which is
    a contradiction. Therefore $(S_3 \setminus N^+(v_0)) \leftrightarrow_{D^c}
    N^+(v_0)$.
    
    Let $D_3$ the subdigraph induced by $S_3 \setminus N^+(v_0)$. Since $D$ is
    critical kernel imperfect, $D_3$ has a kernel, say $K_3$. We claim that
    $K=S_1\cup N^+(v_0)\cup K_3$ is a kernel of $D$. Note that $N^+(v_0)\cup
    K_3\subseteq S_3$ and $S_3 \leftrightarrow_{D^c}S_1$, moreover $K_3\subseteq
    (S_3 \setminus N^+(v_0))$ and $(S_3 \setminus N^+(v_0))
    \leftrightarrow_{D^c} N^+(v_0)$. Since $S_1=N^-(v_0)$, by the definition of
    $K_3$, and by Lemma \ref{lem:asym-4at-neigh}, we obtain that $K$ is an
    independent set of $D$. To prove that $K$ is absorbent, let $u \in V(D)
    \setminus K$. If $u=v_0$, then $v_0 \to N^+(x_0)$. If $u \in S_2$, then, by
    definition there is $w \in S_1$, such that $(u,w)$ is an arc of $D$. And, if
    $u \in S_3$, then $u \in ((S_3 \setminus N^+(v_0)) \setminus N_3)$ and, by
    the definition of $K_3$, there is $x \in K_3$ such that $(u,x)$ is an arc of
    $D$. Therefore, $K$ is a kernel of $D$, which is a contradiction. We
    conclude that $\overrightarrow{C}_5$ is the unique $4$-anti-transitive
    critical kernel imperfect with diameter $3$.
\end{proof}


\section{Conclusions}

We were able to characterize all the critical kernel imperfect digraphs which
are $4$-quasi-transitive oriented graphs.   It turns out that there are only two
of them, $\overrightarrow{A_3}$ and $\overrightarrow{C_5}$.  As we mentioned in
the introduction, every semicomplete digraph is $k$-quasi-transitive for every
positive integer $k$ greater than $1$.   Hence, $\overrightarrow{A_n}$ is a
critical kernel imperfect $4$-quasi-transitive digraph for every $n$ at least
$3$.   So, there is an infinite family of critical kernel imperfect
$4$-quasi-transitive digraphs when we admit symmetric arcs.   It is natural to
ask, are the directed antiholes the only such critical kernel imperfect
digraphs?

\begin{problem}
    Characterize all the $4$-quasi-transitive critical kernel imperfect
    digraphs.   In particular, is it true that such digraphs are either
    $\overrightarrow{C_5}$ or a directed antihole?
\end{problem}

In a similar direction, we where able to characterize all the critical kernel
imperfect $4$-transitive digraphs.   It turns out that the complete list only
contains $\overrightarrow{A_3}$ and $\overrightarrow{A_4}$.   Again, it is clear
that $\overrightarrow{A_5}$ is a $5$-transitive digraph which is critical kernel
imperfect.  Could it be the case that all $k$-transitive critical kernel
imperfect digraphs are directed antiholes?

\begin{question}
    Is it true that every $k$-transitive critical kernel imperfect digraph is a
    directed antihole?
\end{question}

The lack of knowledge of the structure of $k$-anti-transitive digraphs prevented
us from getting more general results for the families of $2$- and
$4$-anti-transitive digraphs.   We propose the problem of finding nice
structural descriptions of the families of $k$-anti-transitive digraphs, at
least for some small values of $k$.   Naturally, it would also be nice to know
the complete families of critical kernel imperfect $2$-anti-transitive digraphs.
Notice that $\overrightarrow{C_7}(1,2)$ is a $3$-anti-transitive asymmetric
digraph having diameter $3$, so it is no longer the case that the only critical
kernel imperfect digraphs are directed odd cycles or directed antiholes.

\vspace{2mm}

\end{document}